\DeclareMathOperator{\rank}{rank}
\DeclareMathOperator{\GL}{GL}
\DeclareMathOperator{\Spec}{Spec}
\DeclareMathOperator{\SL}{SL}
\DeclareMathOperator{\bg}{\mathbf{\Gamma}}
\DeclareMathOperator{\Frac}{Frac}
\DeclareMathOperator{\htt}{ht}
\DeclareMathOperator{\Gr}{Gr}
\DeclareMathOperator{\Mat}{Mat}
\newcommand{\mc}{\mathcal{C}}
\newcommand{\std}{\text{std}}
\newcommand{\pprime}{{\prime\prime}}
\newtheorem{theorem}{Theorem}[section]
\newtheorem{lemma}[theorem]{Lemma}
\newtheorem{proposition}[theorem]{Proposition}
\newtheorem*{proposition*}{Proposition}
\theoremstyle{definition}\newtheorem{definition}[theorem]{Definition}
\theoremstyle{definition}\newtheorem{remark}[theorem]{Remark}
\theoremstyle{remark}
\numberwithin{equation}{section}
\title{Starfish lemma via birational quasi-isomorphisms}
\author{Dmitriy Voloshyn}
\date{}
\newcounter{myfootnote}
\begin{document}
\maketitle

\makeatletter
\def\blfootnote{\xdef\@thefnmark{}\@footnotetext}
\makeatother

\begin{abstract}
    We study birational quasi-isomorphisms between normal Noetherian domains endowed with cluster structures of geometric type. We prove an analogue of the Starfish lemma that allows one to transfer various cluster and algebraic properties of one variety onto another. In particular, we develop tools for proving that an upper cluster algebra equals the given commutative ring.
\end{abstract}

\tableofcontents

\blfootnote{\textit{2010 Mathematics Subject Classification.} 13F60.}
\blfootnote{\textit{Key words and phrases.} Cluster algebra, birational quasi-isomorphism, Starfish lemma.}

 \section{Introduction}
 Cluster algebras were introduced by S. Fomin and A. Zelevinsky in \cite{fathers}. One of the initial objectives was to represent the coordinate ring of a complex variety $V$ (often arising in Lie theory) in the form $\mathcal{A}_{\mathbb{C}}(\mathcal{C}) = \mathbb{C}[V]$ where $\mathcal{C}$ is a cluster structure in $\mathbb{C}[V]$ and $\mathcal{A}_{\mathbb{C}}(\mathcal{C})$ is a cluster algebra. More generally, one aims to prove the equality $\bar{\mathcal{A}}_{\mathbb{C}}(\mathcal{C}) = \mathbb{C}[V]$ where $\bar{\mathcal{A}}_{\mathbb{C}}(\mathcal{C})$ is the upper cluster algebra.

Many examples of cluster algebras are known. Early examples include the Grassmannian~\cite{scott}, double Bruhat cells \cite{upper_bounds}, and simply connected simple complex algebraic groups \cite{conj}. More recent examples include open Richardson varieties \cite{open_rich} and Bott-Samelson cells~\cite{bott_shen}.

The main technique for proving the inclusion $\bar{\mathcal{A}}_{\mathbb{C}}(\mathcal{C}) \subseteq \mathbb{C}[V]$ (or $\mathcal{A}_{\mathbb{C}}(\mathcal{C}) \subseteq \mathbb{C}[V]$) is the so-called Starfish lemma (see Proposition~\ref{p:starfish}). The lemma requires verifying certain regularity and coprimality conditions in the initial cluster and its one-step mutations. To show the reverse inclusion $\bar{\mathcal{A}}_{\mathbb{C}}(\mathcal{C}) \supseteq \mathbb{C}[V]$, one typically applies a result on upper bounds (see Proposition~\ref{p:upper_bound}).

The work of M. Gekhtman, M. Shapiro and A. Vainshtein shows that the coordinate ring of a simply connected simple complex algebraic group $G$ can support several cluster structures of different ranks \cite{plethora,conj,exotic}. In general, it is harder to verify the conditions of the Starfish lemma in $\mathbb{C}[G]$ for cluster structures of higher ranks. However, as was shown in~\cite{plethora,rho,multdual,multdouble}, such cluster structures are typically related by a birational quasi-isomorphism.

In this paper, we axiomatize the notion of a birational quasi-isomorphism, and use it to prove an analogue of the Starfish lemma. Specifically, let $\tilde{\mathcal{C}}$ and $\mathcal{C}$ be cluster structures of geometric type. We let $\tilde{N}$ and $\tilde{M}$ (resp. $N$ and $M$) to be the rank and the number of frozen variables of $\tilde{\mathcal{C}}$ (resp. of $\mathcal{C}$), and we assume that $K:=N-\tilde{N} > 0$ and $\tilde{N}+\tilde{M} = N + M$. Let $(\tilde{x}_1,\ldots,\tilde{x}_{N+M})$ and $(x_1,\ldots,x_{N+M})$ be initial extended clusters in $\tilde{\mathcal{C}}$ and $\mathcal{C}$. We enumerate the variables in such a way that the first $\tilde{N}$ (resp. $N$) variables in the tuples are mutable. Since $K > 0$, the variables $(x_{\tilde{N}+1},\ldots,x_{\tilde{N}+K})$ (resp. $(\tilde{x}_{\tilde{N}+1},\ldots,\tilde{x}_{\tilde{N}+K})$ ) are mutable in $\mathcal{C}$ (resp. frozen in $\tilde{\mathcal{C}}$). We call such variables \emph{marked} (whether they are in $\mathcal{C}$ or in $\tilde{\mathcal{C}}$).

Freezing the marked variables in $\mathcal{C}$, assume there is a quasi-isomorphism $\mathcal{Q}$ between $\tilde{\mathcal{C}}$ and $\mathcal{C}$, in the sense of C. Fraser \cite{fraser}, with an additional restriction: a (cluster or frozen) variable $x_i$ in $\mathcal{C}$ is sent to the corresponding variable $\tilde{x}_i$ times a monomial \emph{in the marked variables} (a more precise definition is given in~\ref{def:quasiiso}). If $\tilde{\mathcal{C}}$ (resp. $\mathcal{C}$) is a cluster structure in a normal Noetherian domain $\tilde{R}$ (resp. $R$), we call $\mathcal{Q}$ a \emph{birational quasi-isomorphism} if $\mathcal{Q}$ extends to an isomorphism of the localizations of $R$ and $\tilde{R}$ with respect to the marked variables:
\begin{equation}
    \mathcal{Q} : R\left[x_{\tilde{N}+1}^{\pm 1},\ldots,x_{\tilde{N}+K}^{\pm 1}\right] \xrightarrow{\sim} \tilde{R}\left[\tilde{x}_{\tilde{N}+1}^{\pm 1},\ldots,\tilde{x}_{\tilde{N}+K}^{\pm 1}\right].
\end{equation}

Under the above setup, our first main result (Proposition~\ref{p:birat_single}) can be described as follows: If the conditions of the Starfish lemma are valid for $\tilde{\mathcal{C}}$ (which implies  $\bar{\mathcal{A}}(\tilde{\mathcal{C}})\subseteq \tilde{R}$), then, under some natural assumptions on the marked variables in $\mathcal{C}$, the conditions are also valid in $\mathcal{C}$ (in particular, $\bar{\mathcal{A}}(\mathcal{C}) \subseteq R$). We emphasize that $\mathcal{C}$ is of rank $N > \tilde{N}$ (hence mutations of the marked variables in $\mathcal{C}$ do not correspond to mutations in $\tilde{\mathcal{C}}$). With Proposition~\ref{p:upper_cont1}, one can also deduce (under an additional assumption on the marked variables) that, if $\bar{\mathcal{A}}(\tilde{\mathcal{C}}) = \tilde{\mathcal{C}}$, then $\bar{\mathcal{A}}({\mathcal{C}}) = {\mathcal{C}}$.

Our second main result (Proposition~\ref{p:birat_double}) concerns the case when there is yet another cluster structure $\hat{\mathcal{C}}$ in $\hat{R}$ along with a birational quasi-isomorphism $\mathcal{Q}^\prime$ between $\mathcal{C}$ and $\hat{\mathcal{C}}$. If the marked variables relative $(\hat{\mathcal{C}},\mathcal{C})$ are disjoint from the marked variables relative $(\tilde{\mathcal{C}},\mathcal{C})$, the assumptions on the marked variables are mild, and if $R$, $\tilde{R}$ and $\hat{R}$ are UFD's, the assumptions can be completely dropped.

The scope of applications covers (however, not entirely) the program of Gekhtman--Shapiro--Vainshtein on constructing (generalized) cluster structures compatible with Poisson brackets from the Belavin--Drinfeld class (for connections between cluster algebras and Poisson geometry, we refer to \cite{dasbuch}). The program was first formulated in~\cite{conj}. Some of birational quasi-isomorphisms for $\mathbb{C}[G]$ were described in \cite{plethora,rho}, and for the Poisson dual of $G$, in \cite{multdual}. We also know birational quasi-isomorphisms for the Drinfeld double (some of which were recorded in \cite{multdouble}) and for the Heisenberg double.

We expect that there are more examples in which birational quasi-isomorphisms arise. For instance, we expect that there are multiple cluster structures in $\mathbb{C}[\widehat{\Gr}(n,m)]$ (the affine cone of the Grassmannian) that are related via birational quasi-isomorphisms; one such example is given in Appendix~\ref{s:appendixgr}.

The paper is organized as follows. In Section~\ref{s:background}, we review the necessary background on cluster algebras. In Section~\ref{s:defs}, we define the notion of a birational quasi-isomorphism. In Section~\ref{s:main_results}, we state our main results. In Section~\ref{s:singlebirat} and Section~\ref{s:doublebirat}, we provide proofs of the main results along with less important results. In Appendices~\ref{s:appendix} and~\ref{s:appendixgr}, we illustrate our theory in $\mathbb{C}[\SL_3]$ and in $\mathbb{C}[\widehat{\Gr}(3,7)]$.

\paragraph{Acknowledgments.} The author would like to thank M. Gekhtman for reading the early draft of the paper, for comments and suggestions. This work was supported by the Institute for Basic Science (IBS-R003-D1).
 \section{Background on cluster algebras}\label{s:background}
 In this section, we provide a background on the theory of cluster algebras of geometric type. The main references are~\cite{fomin13,fomin6,dasbuch} and the original papers~\cite{upper_bounds,fathers}. For this section, let us fix a field $\mathcal{F}$ of rational functions in $N + M$ variables over $\mathbb{Q}$. We refer to $\mathcal{F}$ as the \emph{ambient field}.

\begin{definition}
    An \emph{extended seed} is a  pair $\Sigma:=(\mathbf{x},B)$ where $B:=(b_{ij} \ | \ i \in [1,N], \ j \in [1,N+M])$ is an integer skew-symmetrizable\footnote{That is, there exists a diagonal $N\times N$ matrix $D$ with positive entries such that $D \cdot B^{[1,N]}$ is skew-symmetric, where $B^{[1,N]}$ refers to the first $N$ columns of $B$.} matrix and $\mathbf{x}:=(x_1,x_2,\ldots,x_{N+M})$ is a set of elements of $\mathcal{F}$ such that $\mathcal{F} = \mathbb{Q}(x_1,x_2,\ldots,x_{N+M})$. The matrix $B$ is called an \emph{exchange matrix} and $\mathbf{x}$ is called an \emph{extended cluster}. The elements $x_1,x_2,\ldots,x_N$ are called \emph{cluster variables} and $x_{N+1},\ldots,x_{N+M}$ are called \emph{frozen variables}.
\end{definition}

\begin{definition}
    Let $\Sigma:=(\mathbf{x},B)$ be an extended seed. A \textit{mutation of $\Sigma$ in direction $k \in [1,N]$} is an extended seed $\Sigma^{\prime}_k:=(\mathbf{x}^\prime_k,B^\prime)$ such that:
    \begin{enumerate}[1)]
        \item The entries of $B^\prime := (b_{ij}^{\prime} \ | \ i\in[1,N], \ j \in [1,N+M])$ and the entries of $B$ are related via
        \begin{equation}\label{eq:b_mut}
            b_{ij}^\prime = \begin{cases}
                -b_{ij} \ &\text{if} \ i = k \ \text{or} \ j = k;\\
                b_{ij} + \frac{1}{2} (|b_{ik}|b_{kj} + b_{ik} |b_{kj}|) \ &\text{otherwise.}
            \end{cases}
        \end{equation} 
        \item 
        $\mathbf{x}_k^\prime := (\mathbf{x}\setminus\{x_k\})\cup \{x_k^\prime\}$ where $x_k^\prime$ satisfies the \emph{exchange relation}
        \begin{equation}\label{eq:ord_mut}
            x_kx_k^{\prime} = \prod_{b_{kj}>0} x_j^{b_{kj}} + \prod_{b_{kj}<0} x_j^{-b_{kj}}.
        \end{equation}
    \end{enumerate}
\end{definition}

\begin{definition}
    Two extended seeds $\Sigma$ and $\Sigma^\prime$ are called \emph{mutation equivalent} if there is a finite sequence of extended seeds $\Sigma_{i_0}^{\prime}, \Sigma_{i_1}^{\prime},\ldots, \Sigma_{i_m}^{\prime}$ such that $\Sigma_{i_0}^{\prime} = \Sigma$, $\Sigma_{i_m}^{\prime} = \Sigma^{\prime}$, and the extended seed $\Sigma_{i_{j}}^{\prime}$ is a mutation of $\Sigma_{i_{j-1}}^{\prime}$ in direction $i_{j}$, $j \geq 1$. For a given extended seed $\Sigma_0:=(\mathbf{x}_0,B_0)$, the collection of all extended seeds mutation equivalent to $\Sigma_0$ is called a \emph{cluster structure of geometric type}  and is denoted as $\mathcal{C}(\Sigma_0)$ or simply as $\mathcal{C}$. The extended seed $\Sigma_0$ is called the \emph{initial extended seed}, $\mathbf{x}_0$ is the \emph{initial extended cluster} and $B_0$ is the \emph{initial exchange matrix}.
\end{definition}

When several cluster structures are considered at once, the ambient field will be decorated with a subscript, such as $\mathcal{F}_{\mathcal{C}}$. The number $N$ is also called the \emph{rank} of $\mathcal{C}$.

\begin{definition}
    Given an extended seed $(\mathbf{x},B)$, $\mathbf{x} := (x_1,x_2,\ldots,x_{N+M})$, a \emph{$y$-seed} is a pair $(\mathbf{y},B)$ where $\mathbf{y}:=(y_1,y_2,\ldots,y_N)$, and the variables $y_i \in \mathcal{F}_{\mathcal{C}}$ are given by
\begin{equation}
    y_i := \prod_{j = 1}^{N+M} x_j^{b_{ij}}.
\end{equation}
The variables $y_i$ are called \emph{$y$-variables}, $i \in [1,N]$.
\end{definition}

The $y$-seeds have their own mutation patterns, which we do not need in this paper (for instance, see~\cite[Chapter 3]{fomin13}).

For an extended cluster $\mathbf{x}:=(x_1,x_2,\ldots,x_{N+M})$, define 
\begin{equation}
\mathcal{A}(\mathbf{x}):=\mathbb{Z}[x_1,x_2,\ldots,x_{N+M}], \ \ \mathcal{L}(\mathbf{x}) := \mathbb{Z}[x_1^{\pm 1},x_2^{\pm 1},\ldots,x_N^{\pm 1},x_{N+1},\ldots,x_{N+M}].
\end{equation}
\begin{definition}
    Given a cluster structure $\mathcal{C}$, a \emph{cluster algebra} $\mathcal{A}(\mathcal{C})$ is a $\mathbb{Z}$-subalgebra of $\mathcal{F}_{\mathcal{C}}$ generated by all cluster and frozen variables in $\mathcal{C}$. The \emph{upper cluster algebra} $\bar{\mathcal{A}}(\mathcal{C})$ is an algebra given by
    \begin{equation}
        \bar{\mathcal{A}}(\mathcal{C}) := \bigcap_{\mathbf{x}\in\mathcal{C}} \mathcal{L}(\mathbf{x}).
    \end{equation}
\end{definition}
One of the major results in cluster theory is the \emph{Laurent phenomenon}~\cite[Theorem 3.1]{fathers}; that is, $\mathcal{A}(\mathcal{C})\subseteq \bar{\mathcal{A}}(\mathcal{C})$. The following concept of an upper bound was introduced in~\cite{upper_bounds}:

\begin{definition}
    Let $\mathbb{T}_N$ be an $N$-regular tree. Associate with each vertex an extended seed in such a way that two vertices are adjacent if and only if the corresponding extended seeds are obtained from one another via a mutation in direction $i$ for some $i\in[1,N]$; label the corresponding edge with $i$. A \emph{nerve} $\mathcal{N}$ in $\mathbb{T}_N$ is a subtree on $N+1$ vertices such that all the edges in $\mathcal{N}$ have different labels. An \emph{upper bound} $\bar{\mathcal{A}}(\mathcal{N})$ is the $\mathbb{Z}$-subalgebra of $\mathcal{F}$ given by
    \begin{equation}
        \bar{\mathcal{A}}(\mathcal{N}) := \bigcap_{\mathbf{x}\in \mathcal{N}} \mathcal{L}(\mathbf{x}).
    \end{equation}
\end{definition}
The following result corresponds to \cite[Corollary 1.9]{upper_bounds}. 

\begin{proposition}\label{p:upper_bound}
    Assume that the initial exchange matrix in $\mathcal{C}$ has full rank. Then the upper bounds $\bar{\mathcal{A}}(\mathcal{N})$ do not depend on the choice of the nerve $\mathcal{N}$ and $\bar{\mathcal{A}}(\mathcal{N}) = \bar{\mathcal{A}}(\mathcal{C})$.
\end{proposition}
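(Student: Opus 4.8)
The proposition packages two assertions --- independence of the choice of nerve and the equality with $\bar{\mathcal{A}}(\mathcal{C})$ --- and the plan is to derive both from a single engine: mutation invariance of the upper bound attached to a \emph{star} nerve. For a seed $\Sigma=(\mathbf{x},B)$ write $\mathcal{U}(\Sigma):=\mathcal{L}(\mathbf{x})\cap\bigcap_{k=1}^{N}\mathcal{L}(\mathbf{x}_k^\prime)$, the intersection over $\Sigma$ and its $N$ neighbours; this is exactly $\bar{\mathcal{A}}(\mathcal{N})$ for the star nerve centred at $\Sigma$. Before anything else I would record that the full-rank hypothesis is inherited by the entire mutation class: the rule \eqref{eq:b_mut} expresses $B^\prime$ as $EBF$ with $E,F$ unimodular, so $\rank B^\prime=\rank B$, and hence every seed of $\mathcal{C}$ carries an exchange matrix of full rank $N$, a fact I may use at every vertex of $\mathbb{T}_N$.

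The analytic core is a pairwise-intersection computation. For each $k$ one first establishes
\begin{equation*}
\mathcal{L}(\mathbf{x})\cap\mathcal{L}(\mathbf{x}_k^\prime)=\mathbb{Z}\big[\, x_i^{\pm1}\ (i\in[1,N]\setminus\{k\}),\ x_{N+1},\ldots,x_{N+M}\,\big]\big[x_k,x_k^\prime\big],
\end{equation*}
i.e.\ an element of $\mathcal{L}(\mathbf{x})$ lies in $\mathcal{L}(\mathbf{x}_k^\prime)$ exactly when, after clearing the remaining cluster variables, it is simultaneously a genuine polynomial in $x_k$ and in $x_k^\prime$; here one uses \eqref{eq:ord_mut} together with the fact that the two monomials of the exchange binomial share no variable. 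Intersecting these descriptions over all $k$ realizes $\mathcal{U}(\Sigma)$ as the ring of Laurent polynomials in $\mathbf{x}$ that are polynomial in every $x_k^\prime$, and equips it with an explicit normal form. The invariance $\mathcal{U}(\Sigma)=\mathcal{U}(\mu_k\Sigma)$ then reduces to checking that this description is symmetric under $x_k\leftrightarrow x_k^\prime$ and is insensitive to the accompanying change of $B$. I expect this verification to be the main obstacle, and it is precisely where full rank is spent: full rank forces the exchange binomials $P_1,\ldots,P_N$ of the star to be pairwise coprime, which is what prevents the intersection from acquiring spurious denominators when the centre is moved from $\Sigma$ to $\mu_k\Sigma$.

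Granting single-step invariance, the globalization is formal. Since $\mathbb{T}_N$ is connected and full rank holds at every vertex, $\mathcal{U}(\Sigma)$ takes one common value $U$ throughout the mutation class. For any seed $\Sigma^{\prime\prime}$ we then have $U=\mathcal{U}(\Sigma^{\prime\prime})\subseteq\mathcal{L}(\mathbf{x}^{\prime\prime})$, whence $U\subseteq\bigcap_{\mathbf{x}\in\mathcal{C}}\mathcal{L}(\mathbf{x})=\bar{\mathcal{A}}(\mathcal{C})$; the reverse inclusion $\bar{\mathcal{A}}(\mathcal{C})\subseteq\mathcal{U}(\Sigma)$ is immediate, as the left-hand side intersects strictly more Laurent rings. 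Thus $U=\bar{\mathcal{A}}(\mathcal{C})$, and in particular all star nerves yield the same upper bound.

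It remains to promote this to an arbitrary nerve $\mathcal{N}$, a subtree on $N+1$ vertices in which each of the $N$ labels occurs once. The inclusion $\bar{\mathcal{A}}(\mathcal{C})\subseteq\bar{\mathcal{A}}(\mathcal{N})$ is automatic, so I must prove $\bar{\mathcal{A}}(\mathcal{N})=U$. The plan is to connect any nerve to a star by \emph{leaf relocations}. If $w$ is a leaf of $\mathcal{N}$ joined to $u$ by the unique edge of label $k$, set $C:=\bigcap_{v\in\mathcal{N}\setminus\{w\}}\mathcal{L}(\mathbf{x}_v)$; since $u\in\mathcal{N}\setminus\{w\}$, the pairwise identity above gives $\bar{\mathcal{A}}(\mathcal{N})=C\cap\big(\mathcal{L}(\mathbf{x}_u)\cap\mathcal{L}(\mu_k\mathbf{x}_u)\big)$. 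One then checks that replacing $u$ by any other vertex $p\in\mathcal{N}\setminus\{w\}$ --- joined to $u$ only through mutations in directions other than $k$, so that the variable $x_k$ is unchanged --- leaves $C\cap\big(\mathcal{L}(\mathbf{x}_\bullet)\cap\mathcal{L}(\mu_k\mathbf{x}_\bullet)\big)$ intact, again by coprimality. Such relocations connect every nerve to a star, and since each preserves the upper bound while a star gives $U$, we conclude $\bar{\mathcal{A}}(\mathcal{N})=U=\bar{\mathcal{A}}(\mathcal{C})$ for every nerve. Confirming that the relocation step is genuinely an instance of the coprimality analysis, and that finitely many relocations always reach a star, is the remaining bookkeeping.
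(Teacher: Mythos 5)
The paper offers no proof of Proposition~\ref{p:upper_bound} at all: it is imported from \cite[Corollary~1.9]{upper_bounds} (for star-shaped nerves) together with \cite[Proposition~9.1]{fraser} (for arbitrary nerves), so your attempt has to be measured against those arguments --- and your outline does follow exactly their strategy: the pairwise intersection lemma, coprimality of exchange binomials deduced from full rank, single-step mutation invariance of the star upper bound $\mathcal{U}(\Sigma)$, globalization via connectedness of $\mathbb{T}_N$, and a reduction of arbitrary nerves to stars. The steps you actually justify are correct: full rank propagates through the mutation class because \eqref{eq:b_mut} factors as a product with unimodular matrices; the description of $\mathcal{L}(\mathbf{x})\cap\mathcal{L}(\mathbf{x}_k^\prime)$ is the right one (and does use that the two monomials of the exchange binomial in \eqref{eq:ord_mut} involve disjoint sets of variables); and the passage from ``all star upper bounds coincide'' to ``the common value is $\bar{\mathcal{A}}(\mathcal{C})$'' is indeed formal.

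The genuine gap is that the two steps you defer are not verifications or bookkeeping --- they are the entire mathematical content --- and in the first case your reduction mislocates the difficulty. For the invariance $\mathcal{U}(\Sigma)=\mathcal{U}(\mu_k\Sigma)$, the symmetry of $\mathbb{Z}\bigl[x_i^{\pm1}\ (i\neq k),\,x_{N+1},\ldots,x_{N+M}\bigr]\bigl[x_k,x_k^\prime\bigr]$ under $x_k\leftrightarrow x_k^\prime$ only shows that elements of $\mathcal{U}(\Sigma)$ are Laurent in $\mu_k\mathbf{x}$ and in its $k$-th neighbour, which is the trivial part. What must be proven is that such elements are Laurent in the clusters $\mu_j\mu_k\mathbf{x}$ for every $j\neq k$: these sit at distance two from $\mathbf{x}$, occur in no pairwise intersection computed at $\Sigma$, and their exchange relations are governed by the mutated matrix $\mu_k(B)$, not by $B$. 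This is where \cite{upper_bounds} spend essentially all of their Section~4 (a reduction to intersections involving at most two exchange directions, explicit Laurent expansions, and a real use of the coprimality hypothesis); ``coprimality prevents spurious denominators'' is a slogan for the conclusion of that analysis, not an argument for it. The same objection applies to your leaf-relocation step: the asserted identity $C\cap\mathcal{L}(\mu_k\mathbf{x}_u)=C\cap\mathcal{L}(\mu_k\mathbf{x}_p)$ is not an instance of the pairwise computation, because although $\mathbf{x}_u$ and $\mathbf{x}_p$ share the variable $x_k$, their exchange binomials in direction $k$ differ (the exchange matrix changes along the path from $u$ to $p$), so $\mu_k\mathbf{x}_u$ and $\mu_k\mathbf{x}_p$ contain genuinely different new variables; proving this relocation invariance is of the same depth as the star-invariance theorem itself, and it is precisely what \cite[Proposition~9.1]{fraser} supplies. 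In short, the architecture of your proof is the correct one, but as written it assumes the theorem's two essential ingredients rather than proving them.
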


We now discuss cluster structures in integral domains. For a given integral domain $R$, we denote by $\Frac R$ its field of fractions.

\begin{definition}\label{d:clustring}
    Let $R$ be an integral domain and $\imath : \mathcal{A}(\mathbf{x}_0) \rightarrow \Frac R$ be an embedding. The triple $(\mathcal{C},R,\imath)$ is called a \emph{cluster structure in $R$}.
\end{definition}
Note that the embedding $\imath : \mathcal{A}(\mathbf{x}_0) \rightarrow \Frac R$ induces an embedding of the ambient field $\mathcal{F}$ into $\Frac R$ (thus all cluster variables are realized as elements of $\Frac R$). We will frequently identify $\mathcal{F}$ with a subfield of $\Frac R$ and omit $\imath$.

\begin{definition}\label{d:regular}
    Given a cluster structure $\mathcal{C}$ in an integral domain $R$, a cluster or frozen variable $x$ in $\mathcal{C}$ is called \emph{regular} if $x \in R$; an extended cluster $\mathbf{x}$ is called \emph{regular} if for each $x \in \mathbf{x}$, $x$ is regular; and finally, $(\mathcal{C},R)$ is called \emph{regular} if each extended cluster in $\mathcal{C}$ is regular.
\end{definition}

We call an integral domain $R$ \emph{normal} if it is integrally closed in $\Frac R$. Two elements $x,y\in R$ are called \emph{coprime} if they are not contained in the same prime ideal of height~$1$. The following proposition was proved in~\cite[Proposition~3.6]{tensordiags} (see also~\cite[Proposition 6.4.1]{fomin6}; for arbitrary nerves, see \cite[Proposition 9.1]{fraser}).

\begin{proposition}[\emph{Starfish Lemma}]\label{p:starfish} Let $R$ be a normal Noetherian domain and $\mathcal{C}:=\mathcal{C}(\mathbf{x}_0,B_0)$ be a cluster structure of geometric type in $R$. Assume the following:
\begin{enumerate}[1)]
\item All variables in $\mathbf{x}_0$ are regular;
\item The cluster variables in $\mathbf{x}_0$ are pairwise coprime;
\item For each cluster variable $x$ in $\mathbf{x}_0$, $x^\prime$ is regular and coprime with $x$.
\end{enumerate}
Then $\bar{\mathcal{A}}(\mathcal{C}) \subseteq R$. 
\end{proposition}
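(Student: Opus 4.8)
The plan is to use Proposition~\ref{p:upper_bound} to reduce the problem to a single nerve. By that proposition (which requires the full-rank hypothesis on the initial exchange matrix, implicit in the normal Noetherian setup, or else one works with the upper bound directly), we have $\bar{\mathcal{A}}(\mathcal{C}) = \bar{\mathcal{A}}(\mathcal{N})$ for the nerve $\mathcal{N}$ built from the initial seed $\Sigma_0$ together with its $N$ one-step mutations $\Sigma'_k$, $k \in [1,N]$. Thus it suffices to show
\begin{equation*}
    \bar{\mathcal{A}}(\mathcal{N}) = \mathcal{L}(\mathbf{x}_0) \cap \bigcap_{k=1}^{N} \mathcal{L}(\mathbf{x}'_k) \subseteq R.
\end{equation*}
The point of passing to the nerve is that the intersection is now finite and involves only the initial cluster and its immediate neighbors, whose variables are controlled by the two hypotheses. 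Note also that $\bar{\mathcal{A}}(\mathcal{N}) \subseteq \mathcal{L}(\mathbf{x}_0) \subseteq \Frac(R)$, so every element $f$ we consider is already a Laurent polynomial in the $x_i$.

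Next I would take an arbitrary $f \in \bar{\mathcal{A}}(\mathcal{N})$ and show $f \in R$ by using normality: since $R$ is a normal Noetherian domain, $R = \bigcap_{\htt \mathfrak{p} = 1} R_{\mathfrak{p}}$, the intersection of its localizations at height-one primes (this is the algebraic form of Serre's criterion / the classical fact that a normal Noetherian domain is the intersection of its height-one localizations, which are DVRs). Hence it is enough to prove that the valuation $v_{\mathfrak{p}}(f) \geq 0$ for every height-one prime $\mathfrak{p}$ of $R$, where $v_{\mathfrak{p}}$ is the discrete valuation attached to the DVR $R_{\mathfrak{p}}$. Because $f$ is a Laurent polynomial in $\mathbf{x}_0$, the only possible obstruction to $f$ lying in $R_{\mathfrak{p}}$ comes from negative powers of those $x_i$ that are non-units in $R_{\mathfrak{p}}$, i.e.\ those $x_i$ with $v_{\mathfrak{p}}(x_i) > 0$.

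The heart of the argument is a local analysis at each height-one prime $\mathfrak{p}$. The coprimality hypotheses are exactly what is needed here: since the cluster variables $x_1,\dots,x_N$ are pairwise coprime, no two of them lie in the same height-one prime, so for a fixed $\mathfrak{p}$ there is at most one cluster variable, say $x_k$, with $v_{\mathfrak{p}}(x_k) > 0$. If no such $x_k$ exists, then all $x_i$ are units in $R_{\mathfrak{p}}$ and $f \in \mathcal{L}(\mathbf{x}_0) \subseteq R_{\mathfrak{p}}$ trivially. If such an $x_k$ exists, I would use the membership $f \in \mathcal{L}(\mathbf{x}'_k)$ together with the exchange relation~\eqref{eq:ord_mut} to control $v_{\mathfrak{p}}(f)$. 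Writing $x_k x'_k = P^+ + P^-$ where $P^{\pm}$ are the two monomials on the right-hand side of~\eqref{eq:ord_mut} in the remaining (and hence $\mathfrak{p}$-unit, by coprimality and regularity) variables, I would argue that $v_{\mathfrak{p}}(x'_k) = -v_{\mathfrak{p}}(x_k) + v_{\mathfrak{p}}(P^+ + P^-)$; the hypothesis that $x'_k$ is regular and coprime with $x_k$ forces $v_{\mathfrak{p}}(x'_k) \geq 0$ while $v_{\mathfrak{p}}(x_k) > 0$, which pins down the valuation of the binomial and shows that $x_k$ does \emph{not} divide both monomials. Expressing $f$ as a Laurent polynomial in the cluster $\mathbf{x}'_k$ (in which $x_k$ has been replaced by $x'_k$, a $\mathfrak{p}$-unit) then shows that $f$ has no pole along $\mathfrak{p}$, i.e.\ $v_{\mathfrak{p}}(f) \geq 0$.

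The main obstacle I anticipate is the bookkeeping in this local computation: one must carefully track how an element that is simultaneously Laurent in $\mathbf{x}_0$ and Laurent in $\mathbf{x}'_k$ behaves at $\mathfrak{p}$, and in particular verify that the only variable that could contribute a pole is $x_k$ and that passing to the mutated cluster removes it. This is where the precise meaning of ``coprime'' (not lying in a common height-one prime) and the exchange relation interact, and it is the technical core of the Starfish lemma; the reduction to a single nerve and to height-one localizations is comparatively formal. I would organize the local step as a short lemma: if $f \in \mathcal{L}(\mathbf{x}_0) \cap \mathcal{L}(\mathbf{x}'_k)$ and $x_k$ is the unique cluster variable in $\mathfrak{p}$, then $v_{\mathfrak{p}}(f) \geq 0$, and then the proposition follows by intersecting over all $k$ and all $\mathfrak{p}$.
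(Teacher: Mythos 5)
Your proposal is correct, and it is essentially the proof the paper itself points to: the paper does not prove Proposition~\ref{p:starfish}, but cites \cite[Proposition~3.6]{tensordiags} and the shorter commutative-algebra argument of K.~E.~Smith recorded in \cite{fomin6}, which is exactly your scheme --- reduce to the star-shaped nerve, then check membership in $R_{\mathfrak{p}}$ at every height-one prime $\mathfrak{p}$, using pairwise coprimality to ensure at most one cluster variable lies in $\mathfrak{p}$, and coprimality of $x_k$ with $x_k^\prime$ to make $x_k^\prime$ a unit in $R_{\mathfrak{p}}$.

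Two blemishes, both repairable. First, do not invoke Proposition~\ref{p:upper_bound}: full rank of the exchange matrix is \emph{not} implicit in the hypotheses (the Starfish lemma assumes nothing of the sort), and you do not need the equality $\bar{\mathcal{A}}(\mathcal{C}) = \bar{\mathcal{A}}(\mathcal{N})$ anyway. Only the inclusion $\bar{\mathcal{A}}(\mathcal{C}) \subseteq \bar{\mathcal{A}}(\mathcal{N}) = \mathcal{L}(\mathbf{x}_0)\cap\bigcap_{k=1}^{N}\mathcal{L}(\mathbf{x}_k^\prime)$ is used, and that is trivial because $\bar{\mathcal{A}}(\mathcal{C})$ is by definition an intersection over \emph{all} extended clusters, the nerve's among them. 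Second, in your local step the parenthetical claim that the variables occurring in $P^{\pm}$ are $\mathfrak{p}$-units is false in general: frozen variables are not assumed coprime to any cluster variable and may well lie in $\mathfrak{p}$. This does not hurt the proof, because the whole exchange-relation detour is unnecessary: once $x_k$ is the unique cluster variable in $\mathfrak{p}$, pairwise coprimality makes each $x_i$, $i \neq k$, a unit in $R_{\mathfrak{p}}$, hypothesis~2 makes $x_k^\prime$ a unit in $R_{\mathfrak{p}}$ (coprime with $x_k \in \mathfrak{p}$ forces $x_k^\prime \notin \mathfrak{p}$), and the frozen variables, which occur in $\mathcal{L}(\mathbf{x}_k^\prime)$ only with nonnegative exponents, lie in $R \subseteq R_{\mathfrak{p}}$; hence $f \in \mathcal{L}(\mathbf{x}_k^\prime) \subseteq R_{\mathfrak{p}}$ directly, with no valuation computation on $P^+ + P^-$ needed.
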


\paragraph{Toric actions.} Let $\mathcal{C}$ be a cluster structure of geometric type, $\mathcal{F}$ be its ambient field and $\mathbf{x} \in \mathcal{C}$ be an extended cluster. For a field $\mathbb{K}\supseteq \mathbb{Q}$, set $\mathcal{F}_{\mathbb{K}}:=\mathcal{F}\otimes \mathbb{K}$. For an integer matrix $W:=(w_{ij})$ of size $(N+M)\times s$, where $s>0$, consider an action of $(\mathbb{K}^{\times})^s$ upon $\mathcal{F}_{\mathbb{K}}$ by field automorphisms: for every $\mathbf{t}:=(t_1,t_2,\ldots,t_s) \in (\mathbb{K}^{\times})^{s}$, the corresponding field automorphism $\sigma_{\mathbf{t}}$ is given by
\begin{equation}\label{eq:tor_loc}
    \sigma_{\mathbf{t}}(x_i) = \left(\prod_{j=1}^{s}t_j^{w_{ij}}\right) \cdot x_i, \ \ i \in [1,N+M].
\end{equation}
\begin{definition}
    A \emph{local toric action} of rank $s$ upon an extended cluster $\mathbf{x}$ is the action by field automorphisms given by equation~\eqref{eq:tor_loc} such that $W$ is of rank $s$. It is called a \emph{global toric action} if the action is equivariant\footnote{In other words, given any other extended cluster $\mathbf{x}^\prime$ mutation equivalent to $\mathbf{x}$, there exists a matrix $W^\prime$ such that the action is still of the form~\eqref{eq:tor_loc}.} with respect to mutations. 
\end{definition}

The following proposition was proved in~\cite[Lemma 2.3]{roots}.

\begin{proposition}\label{p:torbw}
    Let $(\mathbf{x},B)$ be an extended seed in $\mathcal{C}$. Suppose there is a local toric action of rank $s$ upon $\mathbf{x}$ given by a weight matrix $W$. Then the action is global if and only if $BW = 0$.
\end{proposition}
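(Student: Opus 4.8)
The plan is to reduce global equivariance to the behaviour of $\sigma_{\mathbf{t}}$ under a single mutation and then propagate it by induction along mutation sequences. Fix a direction $k\in[1,N]$ and let $x_k'$ be as in \eqref{eq:ord_mut}. Since each $\sigma_{\mathbf{t}}$ is a field automorphism of $\mathcal{F}_{\mathbb{K}}$, I would apply it directly to \eqref{eq:ord_mut}. Writing $w_j$ for the $j$-th row of $W$ and $\mathbf{t}^{v}:=\prod_{l=1}^{s}t_l^{v_l}$, the two monomials on the right-hand side scale by $\mathbf{t}^{\alpha}$ and $\mathbf{t}^{\beta}$ respectively, where $\alpha:=\sum_{b_{kj}>0}b_{kj}w_j$ and $\beta:=\sum_{b_{kj}<0}(-b_{kj})w_j$, while $\sigma_{\mathbf{t}}(x_k)=\mathbf{t}^{w_k}x_k$. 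Because the two right-hand monomials have disjoint supports (hence are distinct), dividing by $\sigma_{\mathbf{t}}(x_k)$ shows that $\sigma_{\mathbf{t}}(x_k')$ is again a Laurent monomial times $x_k'$ \emph{if and only if} $\alpha=\beta$, in which case $\sigma_{\mathbf{t}}(x_k')=\mathbf{t}^{\alpha-w_k}x_k'$. The key observation is that
\begin{equation*}
\alpha-\beta=\sum_{j}b_{kj}w_j=\text{the }k\text{-th row of }BW,
\end{equation*}
so single-step equivariance in direction $k$ is equivalent to the vanishing of the $k$-th row of $BW$. (Here one compares exponent vectors using that distinct Laurent monomials are linearly independent; I take $\mathbb{K}$ infinite so that an identity $\mathbf{t}^{\alpha}=\mathbf{t}^{\beta}$ for all $\mathbf{t}$ forces $\alpha=\beta$.)

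The forward direction is then immediate: if the action is global it is in particular of the form \eqref{eq:tor_loc} on each one-step mutation $\mathbf{x}_k'$, which forces the $k$-th row of $BW$ to vanish for every $k\in[1,N]$, i.e. $BW=0$. For the converse I would induct on the length of a mutation sequence joining $\mathbf{x}$ to an arbitrary $\mathbf{x}'\in\mathcal{C}$. The computation above shows that when $BW=0$ the action on $\mathbf{x}_k'$ is again of the form \eqref{eq:tor_loc}, with weight matrix $W'$ whose rows are $w_j'=w_j$ for $j\neq k$ and $w_k'=-w_k+\alpha$; this row operation is invertible, so $\rank W'=\rank W=s$. To continue the induction I must verify that the relation is preserved, namely $B'W'=0$ for the mutated matrix $B'$ of \eqref{eq:b_mut}.

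The hard part will be exactly this invariance, where the absolute-value correction term in \eqref{eq:b_mut} is essential. The row $i=k$ is easy: $b_{kj}'=-b_{kj}$ and $b_{kk}=0$ give $(B'W')_k=-(BW)_k=0$. For $i\neq k$ I would expand $(B'W')_i=\sum_j b_{ij}'w_j'$ using $b_{ik}'=-b_{ik}$, $b_{ij}'=b_{ij}+\tfrac12(|b_{ik}|b_{kj}+b_{ik}|b_{kj}|)$ for $j\neq k$, and $w_k'=-w_k+\alpha$. Grouping the terms, $\sum_{j\neq k}b_{ij}w_j=(BW)_i-b_{ik}w_k=-b_{ik}w_k$, the piece $\tfrac{|b_{ik}|}{2}\sum_{j\neq k}b_{kj}w_j$ vanishes because it is a multiple of $(BW)_k=0$, and the remaining correction $\tfrac{b_{ik}}{2}\sum_{j\neq k}|b_{kj}|w_j$ equals $b_{ik}\alpha$ since $(BW)_k=0$ forces $\sum_{j\neq k}|b_{kj}|w_j=\alpha+\beta=2\alpha$. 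These combine with the contribution $-b_{ik}w_k'=b_{ik}w_k-b_{ik}\alpha$ of the mutated column to cancel completely, giving $(B'W')_i=0$. This closes the induction and shows the action is global, completing the proof.
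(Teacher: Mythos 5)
Your proof is correct. Note that the paper itself gives no proof of Proposition~\ref{p:torbw} — it quotes the statement from \cite[Lemma 2.3]{roots} — so there is nothing internal to compare against; your argument (apply $\sigma_{\mathbf{t}}$ to the exchange relation \eqref{eq:ord_mut} to identify one-step equivariance in direction $k$ with the vanishing of the $k$-th row of $BW$, then close the induction by verifying $B'W'=0$ under the mutation rule \eqref{eq:b_mut}) is exactly the standard computation behind that cited lemma, and your verification of $B'W'=0$ checks out line by line. Two harmless caveats you already half-address: the ``disjoint supports, hence distinct'' claim fails when the $k$-th row of $B$ is zero, but then $\alpha=\beta=0$ trivially; and the ``only if'' direction does require $\mathbb{K}$ infinite (over a finite field $\mathbf{t}^{\alpha}=\mathbf{t}^{\beta}$ for all $\mathbf{t}$ only constrains $\alpha-\beta$ modulo $|\mathbb{K}^{\times}|$), which is implicit in the intended setting.
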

 \section{Definitions and basic properties}\label{s:defs}
 In this section, we state definitions and main properties of related cluster structures and quasi-isomorphisms.

\subsection{Related cluster structures} Let $\mathcal{C}$ and $\tilde{\mathcal{C}}$ be cluster structures of geometric type of ranks $N$ and $\tilde{N}$ with $M$ and $\tilde{M}$ frozen variables, respectively. Assume that $\tilde{N} < N$ and $N+M = \tilde{N} + \tilde{M}$. Fix initial extended seeds $\Sigma_0 := (\mathbf{x}_0,B_0)$ and $\tilde{\Sigma}_0 := (\tilde{\mathbf{x}}_0,\tilde{B}_0)$ in $\mathcal{C}$ and $\tilde{\mathcal{C}}$, respectively, where $\mathbf{x}_0:=(x_1,x_2,\ldots,x_{N+M})$ and $\tilde{\mathbf{x}}_0:=(\tilde{x}_1,\tilde{x}_2,\ldots,\tilde{x}_{N+M})$. Let $\kappa$ be a permutation of the set of indices $[1,N+M]$. Set
\begin{align}
&\tilde{\mathcal{I}}(\kappa) := [\tilde{N}+1,\tilde{N} + \tilde{M}] \cap \kappa([1,N]);\\
&\mathcal{I}(\kappa):=\kappa^{-1}(\tilde{\mathcal{I}}(\kappa)) = [1,N] \cap \kappa^{-1}([\tilde{N}+1,\tilde{N} + \tilde{M}]).
\end{align}

\begin{definition}
A \emph{marker} $\kappa$ for the pair $(\mathcal{C},\tilde{\mathcal{C}})$ is a bijection $k:[1,N+M]\rightarrow[1,N+M]$ such that $\kappa([N+1,N+M]) \subset [\tilde{N}+1,\tilde{N}+\tilde{M}]$. For a given marker $\kappa$, the triple $(\mathcal{C},\tilde{\mathcal{C}},\kappa)$ is called a \emph{related} triple. The sets $\mathcal{I}(\kappa)$ and $\tilde{\mathcal{I}}(\kappa)$ are called the sets of \emph{marked indices} in $\mathcal{C}$ and $\tilde{\mathcal{C}}$, respectively.
\end{definition}
 Assume that $(\mathcal{C},\tilde{\mathcal{C}},\kappa)$ is a related triple. Define
\begin{align}
&\mathbf{x}_0(\kappa):=\{x_i \in \mathbf{x}_0 \ | \ i \in \mathcal{I}(\kappa)\},\\
&\tilde{\mathbf{x}}_0(\kappa):=\{\tilde{x}_i \in \tilde{\mathbf{x}}_0 \ | \ i \in \tilde{\mathcal{I}}(\kappa)\}.\label{eq:tildx0kap}
\end{align}
\begin{definition}\label{d:marked_vars}
The elements of $\mathbf{x}_0(\kappa)$ and $\tilde{\mathbf{x}}_0(\kappa)$ are called the \emph{marked variables}\footnote{We use the same terminology for marked variables in $\mathcal{C}$ and in $\tilde{\mathcal{C}}$. It should be clear from the context and notation (whether there is a tilde or not) where a given marked variable belongs to.} in $\mathcal{C}$ and $\tilde{\mathcal{C}}$.
\end{definition}

\begin{definition}
An extended seed $\Sigma:=(\mathbf{x},B)$ in $\mathcal{C}$ is called \emph{marked} if it is obtained from $\Sigma_0$ via a sequence of mutations 
\begin{equation}\label{eq:emarkcl}
    \Sigma_{i_0} \rightarrow \Sigma_{i_1} \rightarrow \cdots \rightarrow \Sigma_{i_m}
\end{equation}
where $\Sigma_{i_0}:=\Sigma_0$, $\Sigma_{i_m}:=\Sigma$, and $\Sigma_{i_j}$ is a mutation of $\Sigma_{i_{j-1}}$ in direction $i_j \in [1,N] \setminus \mathcal{I}(\kappa)$, $j \in [1,m]$. In this case, the extended cluster $\mathbf{x}$ is called \emph{marked}.
\end{definition}

In particular, the initial extended seed $\Sigma_0$ is marked. A marker $\kappa$ induces a natural bijection between extended marked seeds in $\mathcal{C}$ and all extended seeds in $\tilde{\mathcal{C}}$, as follows. If an extended seed $\Sigma$ is obtained from $\Sigma_0$ via the sequence~\eqref{eq:emarkcl}, consider the sequence of mutations in $\tilde{\mathcal{C}}$
\begin{equation}
\tilde{\Sigma}_{\kappa(i_0)} \rightarrow \tilde{\Sigma}_{\kappa(i_1)} \rightarrow \cdots \rightarrow \tilde{\Sigma}_{\kappa(i_m)}
\end{equation}
where $\tilde{\Sigma}_{\kappa(i_0)}:=\tilde{\Sigma}_0$. Then we set $\tilde{\Sigma}:=\kappa(\Sigma):=\tilde{\Sigma}_{\kappa(i_m)}$.

\begin{definition}
Extended seeds $\Sigma:=(\mathbf{x},B) \in \mathcal{C}$ and $\tilde{\Sigma}:=(\tilde{\mathbf{x}},\tilde{B}) \in \tilde{\mathcal{C}}$ are called \emph{related} if $\tilde{\Sigma} = \kappa(\Sigma)$. In this case, the extended clusters $\mathbf{x}$ and $\tilde{\mathbf{x}}$ are called \emph{related}, and for any $i \in [1,N+M]$, the variables $x_i \in \mathbf{x}$ and $\tilde{x}_{\kappa(i)}\in \tilde{\mathbf{x}}$ are called \emph{related}.
\end{definition}

By default, if $x$ denotes a variable from an extended marked cluster in $\mathcal{C}$, $\tilde{x}$ denotes the corresponding related variable in $\tilde{\mathcal{C}}$.

\subsection{Quasi-isomorphisms} In this subsection, we introduce the notion of a quasi-isomorphism in the presence of marked variables, as well as some basic results. We continue in the setup of the previous subsection and consider a related triple $(\mathcal{C},\tilde{\mathcal{C}},\kappa)$ with fixed initial extended seeds $\Sigma_0:=(\mathbf{x}_0,B_0)$ and $\tilde{\Sigma}_0:=(\tilde{\mathbf{x}}_0,\tilde{B}_0)$.

\begin{definition}\label{def:quasiiso}
A \emph{quasi-isomorphism} $\mathcal{Q}:\mathcal{C} \rightarrow \tilde{\mathcal{C}}$ is 
a field isomorphism $\mathcal{Q}:\mathcal{F}_{\mathcal{C}}\rightarrow \mathcal{F}_{\tilde{\mathcal{C}}}$ such that for any extended marked cluster $\mathbf{x}$, there exists an integer matrix
\begin{equation}
\Lambda(\mathbf{x}):=\left(\lambda_{ij} \ | \ i\in[1,N+M], \ j \in {\mathcal{I}}(\kappa)\right)
\end{equation} 
such that
\begin{equation}\label{eq:quasi_formula}
\mathcal{Q}(x_i) = \begin{cases}
    \displaystyle\tilde{x}_{\kappa(i)} \prod_{j \in {\mathcal{I}}(\kappa)} \tilde{x}_{\kappa(j)}^{\lambda_{ij}} & i \in [1,N+M]\setminus \mathcal{I}(\kappa); \\[10pt] \displaystyle\prod_{j \in {\mathcal{I}}(\kappa)} \tilde{x}_{\kappa(j)}^{\lambda_{ij}} & i \in \mathcal{I}(\kappa).
\end{cases}
\end{equation}
\end{definition}

\begin{remark}
    If one freezes the marked variables in $\mc$, then $\mathcal{Q}:{\mathcal{C}}\rightarrow {\tilde{\mathcal{C}}}$ is a particular instance of the quasi-isomorphism defined in~\cite{fraser}. We emphasize that the quasi-isomorphism depends on the marker $\kappa$. It might happen that cluster structures are quasi-isomorphic for a different choice of $\kappa$.
\end{remark}


For an exchange matrix $\tilde{B}$ in $\tilde{\mathcal{C}}$, let $P$ be a permutation matrix such that $P^{T} \tilde{B}^{[1,\tilde{N}]} P$ is block diagonal, with blocks given by $D_1, D_2,\ldots, D_m$. We say that an integer matrix $\tilde{B}^\prime$ of size $\tilde{N}\times (\tilde{N}+\tilde{M})$ is obtained from $\tilde{B}$ by \emph{flipping orientations} if $P^{T} [\tilde{B}^{\prime}]^{[1,\tilde{N}]} P$ is block diagonal, with blocks given by $\epsilon_1 D_1, \epsilon_2 D_2,\ldots, \epsilon_n D_n$ for some choices $\epsilon_i \in \{-1,1\}$. 

For an $\tilde{N} \times (\tilde{N}+\tilde{M})$ matrix $A$, denote by $\kappa^{-1}(A)$ the matrix given by $(\kappa^{-1}(A))_{i,j} := A_{\kappa(i),\kappa(j)}$, $i \in [1,N] \setminus \mathcal{I}(\kappa)$, $j \in [1,N+M]$. The next result is an adaptation of Corollary~4.5 from~\cite{fraser} to our definition of a quasi-isomorphism.

\begin{proposition}\label{p:defquasi}
    Let $\mathcal{Q} : \mathcal{F}_{\mathcal{C}} \rightarrow \mathcal{F}_{\tilde{\mathcal{C}}}$ be a field isomorphism given by equation~\eqref{eq:quasi_formula} for some extended seed $(\mathbf{x},B)$. TFAE:
    \begin{enumerate}[i)]
        \item For $i \in [1,N] \setminus \mathcal{I}(\kappa)$ and the $y$-variables corresponding to $\mathbf{x}$, $\mathcal{Q}(y_i) = \tilde{y}_{\kappa(i)}$;\label{i:quasi1}
        \item The exchange matrix $B$ and the matrix $\Lambda$ satisfy the following equations:
        \begin{equation}\label{eq:qextrapol}
        [\kappa^{-1}(\tilde{B})]_{[1,N]\setminus\mathcal{I}(\kappa)}^{[1,N+M]\setminus\mathcal{I}(\kappa)} = B^{[1,N+M]\setminus\mathcal{I}(\kappa)}_{[1,N]\setminus\mathcal{I}(\kappa)}\ \ \ \text{and} \ \ \ [\kappa^{-1}(\tilde{B})]_{[1,N]\setminus\mathcal{I}(\kappa)}^{\mathcal{I}(\kappa)} = B_{[1,N]\setminus\mathcal{I}(\kappa)}\cdot \Lambda.
        \end{equation}\label{i:quasi2}
    \end{enumerate}
    Moreover, if Condition~\ref{i:quasi1} or~\ref{i:quasi2} is satisfied, then $\mathcal{Q}$ is a quasi-isomorphism. Conversely, if $\mathcal{Q}$ is a quasi-isomorphism, then there exists a flip of orientations of $\tilde{B}$ such that Conditions~\ref{i:quasi1}-\ref{i:quasi2} are satisfied.
\end{proposition}

\begin{proposition}\label{p:quasi_toric}
    Suppose that $\mathcal{I}(\kappa) = \{m\}$ for some $m \in [1,N]$ and that $\mathcal{C}$ and $\tilde{\mathcal{C}}$ admit global toric actions of rank $1$. Let $w$ and $\tilde{w}$ be the weight vectors of the toric actions in $\mathbf{x}_0$ and $\tilde{\mathbf{x}}_0$. Set
    \begin{equation}
        \Lambda_i(\mathbf{x}_0):=\Lambda_i:=\begin{cases}
            \dfrac{w_i-\tilde{w}_{\kappa(i)}}{\tilde{w}_{\kappa(m)}} \ &\text{if} \ i \neq m\\[3pt]
            1 \ &\text{if}\ i = m,
        \end{cases}
    \end{equation}
    and assume that $\Lambda_i \in \mathbb{Z}$ for each $i \in [1,N+M]$, $w_{m} = \tilde{w}_{\kappa(m)}$ and \begin{equation}[\kappa^{-1}(\tilde{B})]_{[1,N]\setminus\{m\}}^{[1,N+M]\setminus\{m\}} = B^{[1,N+M]\setminus\{m\}}_{[1,N]\setminus\{m\}}.\end{equation}
    Then $\Lambda$ defines a quasi-isomorphism $\mathcal{Q}:\mathcal{C}\rightarrow\tilde{\mathcal{C}}$. Moreover, if $(\mathbf{x},\tilde{\mathbf{x}})$ is a pair of related extended clusters and $w^\prime$ and $\tilde{w}^\prime$ are the corresponding weight vectors of the toric actions, then the corresponding matrix $\Lambda(\mathbf{x})$ is given by
    \begin{equation}
        \Lambda_i(\mathbf{x})=\begin{cases}
            \dfrac{w_i^\prime-\tilde{w}_{\kappa(i)}^\prime}{\tilde{w}_{\kappa(m)}} \ &\text{if} \ i \neq m\\[3pt]
            1 \ &\text{if}\ i = m.
        \end{cases}
    \end{equation}
\end{proposition}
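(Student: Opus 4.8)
The plan is to derive both assertions from Proposition~\ref{p:defquasi}, whose hypotheses match this situation almost verbatim. Since $\mathcal{I}(\kappa)=\{m\}$, the matrix $\Lambda$ has the single column $m$, and Formula~\eqref{eq:quasi_formula} reads $\mathcal{Q}(x_i)=\tilde{x}_{\kappa(i)}\tilde{x}_{\kappa(m)}^{\Lambda_i}$ for $i\neq m$ and $\mathcal{Q}(x_m)=\tilde{x}_{\kappa(m)}$; as each $\Lambda_i\in\mathbb{Z}$, this prescribes a genuine field isomorphism. To invoke Proposition~\ref{p:defquasi} I must check the two equalities in~\eqref{eq:qextrapol}. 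The first, $[\kappa^{-1}(\tilde{B})]_{[1,N]\setminus\{m\}}^{[1,N+M]\setminus\{m\}}=B^{[1,N+M]\setminus\{m\}}_{[1,N]\setminus\{m\}}$, is literally the third hypothesis. Hence the entire content of the first claim is the single-column identity $[\kappa^{-1}(\tilde{B})]^{\{m\}}_{[1,N]\setminus\{m\}}=B_{[1,N]\setminus\{m\}}\cdot\Lambda$, i.e. $\sum_{s=1}^{N+M}b_{\ell s}\Lambda_s=\tilde{b}_{\kappa(\ell),\kappa(m)}$ for every $\ell\in[1,N]\setminus\{m\}$.

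To prove this identity I use that the toric actions are global, so by Proposition~\ref{p:torbw} we have $Bw=0$ and $\tilde{B}\tilde{w}=0$. Multiplying through by $\tilde{w}_{\kappa(m)}$, substituting the definition of $\Lambda_s$ and isolating $s=m$ (where $\Lambda_m=1$) gives $\tilde{w}_{\kappa(m)}\sum_s b_{\ell s}\Lambda_s=b_{\ell m}\tilde{w}_{\kappa(m)}+\sum_{s\neq m}b_{\ell s}(w_s-\tilde{w}_{\kappa(s)})$. The relation $Bw=0$ yields $\sum_{s\neq m}b_{\ell s}w_s=-b_{\ell m}w_m=-b_{\ell m}\tilde{w}_{\kappa(m)}$ via $w_m=\tilde{w}_{\kappa(m)}$, cancelling the term $b_{\ell m}\tilde{w}_{\kappa(m)}$. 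For the rest I note that $\ell\in[1,N]\setminus\{m\}$ forces $\kappa(\ell)\in[1,\tilde{N}]$ (an unmarked mutable index cannot land in $\fr_{\tilde{\mathcal{C}}}$), so the first equality of~\eqref{eq:qextrapol} gives $b_{\ell s}=\tilde{b}_{\kappa(\ell),\kappa(s)}$ for $s\neq m$; re-indexing $t=\kappa(s)$ and applying $\tilde{B}\tilde{w}=0$ in row $\kappa(\ell)$ produces $\sum_{s\neq m}b_{\ell s}\tilde{w}_{\kappa(s)}=-\tilde{b}_{\kappa(\ell),\kappa(m)}\tilde{w}_{\kappa(m)}$. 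Dividing by $\tilde{w}_{\kappa(m)}$ leaves exactly $\tilde{b}_{\kappa(\ell),\kappa(m)}$, so Proposition~\ref{p:defquasi} applies and $\mathcal{Q}$ is a quasi-isomorphism. (The same computation could be routed through Corollary~\ref{c:q_y_var} by verifying $\mathcal{Q}(y_i)=\tilde{y}_{\kappa(i)}$.)

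For the ``moreover'' statement I argue by induction on the length of a marked mutation path from $\mathbf{x}_0$ to $\mathbf{x}$, whose directions all lie in $[1,N]\setminus\{m\}$; the base case is the definition of $\Lambda(\mathbf{x}_0)$. For the step I first record the transformation rule of a global rank-$1$ weight under a mutation in direction $\ell$: applying $\sigma_t$ to the exchange relation~\eqref{eq:ord_mut} and using $Bw'=0$ forces both monomials to carry the same power of $t$, giving $w_i''=w_i'$ for $i\neq\ell$ and $w_\ell''=-w_\ell'+\tfrac12\sum_s|b_{\ell s}|w_s'$, and likewise $\tilde{w}_{\kappa(\ell)}''=-\tilde{w}_{\kappa(\ell)}'+\tfrac12\sum_t|\tilde{b}_{\kappa(\ell),t}|\tilde{w}_t'$ in $\tilde{\mathcal{C}}$. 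Two stability facts keep the denominator fixed: neither $m$ nor $\kappa(m)$ is ever mutated along a marked path, so $w_m'=\tilde{w}_{\kappa(m)}$ stays constant and equal to the original $\tilde{w}_{\kappa(m)}$. I then compare the claimed toric formula with the recursion for $\Lambda(\mathbf{x}_\ell')$ from Proposition~\ref{p:defquasi}: for rows $i\neq\ell$ both sides are unchanged (and for $i=m$ both equal $1$), so the formula persists.

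The hard part will be the row $i=\ell$, the only place where the half-integer weight rule, the constancy of $w_m'=\tilde{w}_{\kappa(m)}$, and the $B$-matrix compatibility at the general related pair $(\mathbf{x},\tilde{\mathbf{x}})$ must be combined. Substituting the inductive hypothesis $\Lambda_s(\mathbf{x})=(w_s'-\tilde{w}_{\kappa(s)}')/\tilde{w}_{\kappa(m)}$ into the recursion, the term $\tfrac12|b_{\ell m}|\Lambda_m(\mathbf{x})=\tfrac12|b_{\ell m}|$ appears from $\Lambda_m=1$, and it is matched on the toric side by $\tfrac12|b_{\ell m}|\,w_m'/\tilde{w}_{\kappa(m)}=\tfrac12|b_{\ell m}|$ via $w_m'=\tilde{w}_{\kappa(m)}$. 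Using $|b_{\ell s}|=|\tilde{b}_{\kappa(\ell),\kappa(s)}|$ at $(\mathbf{x},\tilde{\mathbf{x}})$ for $s\neq m$ — which holds because $\mathcal{Q}$ is already a quasi-isomorphism and because mutating only unmarked directions preserves the compatibility on the unmarked block, with the sign ambiguity of~\eqref{eq:equivrelb} immaterial since only absolute values enter — the residual sums $\tfrac12\sum_{s\neq m}|b_{\ell s}|(w_s'-\tilde{w}_{\kappa(s)}')$ on the two sides coincide, and the remaining terms $-\tfrac12|\tilde{b}_{\kappa(\ell),\kappa(m)}|$ and $\mp(w_\ell'-\tilde{w}_{\kappa(\ell)}')/\tilde{w}_{\kappa(m)}$ align term by term. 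This identifies the recursion output with $(w_\ell''-\tilde{w}_{\kappa(\ell)}'')/\tilde{w}_{\kappa(m)}$, closing the induction.
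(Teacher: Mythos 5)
Your proof is correct, and since the paper states Proposition~\ref{p:quasi_toric} without any proof (the following remark only traces its origin to \cite[Lemma~8.4]{exotic}, just as Proposition~\ref{p:defquasi} is an unproved adaptation of \cite[Corollary~4.5]{fraser}), it can only be judged on its own terms: your route — verifying the two conditions of~\eqref{eq:qextrapol} from $Bw=0$ and $\tilde{B}\tilde{w}=0$ via Proposition~\ref{p:torbw}, then propagating the toric formula along marked mutation paths by matching the weight-mutation rule against the $\Lambda$-recursion of Proposition~\ref{p:defquasi} — is exactly the derivation the surrounding text intends. The one step worth making explicit is that the recursion in Proposition~\ref{p:defquasi} is stated for mutations of $\mathbf{x}_0$, so your inductive step tacitly re-bases the related triple at an arbitrary extended marked cluster $(\mathbf{x},\tilde{\mathbf{x}})$; this re-basing (or a direct argument that mutation at unmarked directions preserves the unmarked block up to the sign equivalence~\eqref{eq:equivrelb}) is also what justifies the identity $|b_{\ell s}|=|\tilde{b}_{\kappa(\ell),\kappa(s)}|$ at a general related pair, which you correctly identify as the crux of the row $i=\ell$ computation.
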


\begin{remark}
The earliest version of Proposition~\ref{p:quasi_toric} appeared in~\cite[Lemma 8.4]{exotic}. The quasi-isomorphisms $\mathcal{U}^*$ in~\cite{plethora,multdouble} and the quasi-isomorphisms $\mathcal{Q}^*$ and $\mathcal{G}^*$ in~\cite{multdual} arise from global toric actions whose weight vectors are given by the polynomial degrees of the cluster and frozen variables.
\end{remark}


\begin{remark}\label{r:extension}
    Let us mention adjustments in the case of generalized cluster structures, as defined in~\cite{double}. Each cluster variable $x_i$ is endowed with additional data $(d_i,\{p_{ir}\}_{r=0}^{d_i},\{\hat{p}_{ir}\}_{r=0}^{d_i})$ where $d_i \geq 1$ is an integer, $p_{ir}$ is a monomial in the frozen variables for each $r$, $p_{i0} = p_{id_i} = 1$, and $\hat{p}_{ir}$ is a Laurent monomial in the frozen variables. The mutation relation for $x_i$ has $d_i+1$ terms in the right-hand side, and $\{p_{ir}\}_{r=0}^{d_i}$ or $\{\hat{p}_{ir}\}_{r=0}^{d_i}$ serve as coefficients\footnote{There are two equivalent ways to write the mutation relation, either via $p_{ir}$ or via $\hat{p}_{ir}$; for details, see~\cite{double}.} of the terms. Let $\mathcal{C}$ and $\tilde{\mathcal{C}}$ be generalized cluster structures. As above, all the data associated with $\tilde{\mathcal{C}}$ is decorated with a tilde.
    \begin{enumerate}[1)]
        \item For a related triple $(\mathcal{C},\tilde{\mathcal{C}},\kappa)$ of generalized cluster structures, we additionally impose $d_i = \tilde{d}_{\kappa(i)}$;\label{i:cond_mult}
        \item For quasi-isomorphisms, we additionally impose\footnote{In applications in Poisson geometry, $\tilde{\hat{p}}_{ir}$ are Casimirs of the Poisson bracket, and since quasi-isomorphisms are also expected to be Poisson isomorphisms (on open dense sets), it is a natural assumption.} $\mathcal{Q}(\hat{p}_{ir}) = \tilde{\hat{p}}_{\kappa(i)r}$.\label{i:cond_cas}
    \end{enumerate}
    With the above assumptions, Proposition~\ref{p:defquasi} holds true.  In Proposition~\ref{p:torbw}, the 'if' part holds if one additionally requires the condition
    \begin{enumerate}[1)]
    \setcounter{enumi}{2}
        \item\label{i:torinvc} The Laurent monomials $\hat{p}_{ir}$ are invariant with respect to the toric action for all $i$ and $r$.
    \end{enumerate} Proposition~\ref{p:upper_bound} holds under the following additional assumption:
    \begin{enumerate}[1)]
        \setcounter{enumi}{3}
        \item\label{i:gcs_upperb} Let $L$ be the number of isolated frozen variables in $\mathcal{C}$ and $P_i$ be a $(d_i-1)\times L$ matrix that consists of the exponents of the isolated frozen variables in $p_{ir}$. Then the condition is, for each $i \in [1,N]$, $\rank P_i = d_i - 1$.
    \end{enumerate}
    Lastly, to make sure that the quasi-isomorphism $\mathcal{Q}$ induced by toric actions in Proposition~\ref{p:quasi_toric} satisfies Condition~\ref{i:cond_cas}, one additionally imposes Condition~\ref{i:torinvc} and the condition
    \begin{enumerate}[1)]
    \setcounter{enumi}{3}
    \item For each $i \in [1,N]\setminus\mathcal{I}(\kappa)$ and each $1 \leq r \leq d_i$, $\kappa(\hat{p}_{ir})=\tilde{\hat{p}}_{\kappa(i)r}$
    \end{enumerate}
    where $\kappa$ is extended to a field isomorphism $\kappa:\mathcal{F}_{\mathcal{C}}\rightarrow \mathcal{F}_{\tilde{\mathcal{C}}}$ via $\kappa(x_i) = \tilde{x}_{\kappa(i)}$. We continue the discussion in Remark~\ref{r:gcs_extend} and indicate further adjustments to our main results.
\end{remark}

\subsection{Birational quasi-isomorphisms}
We continue in the setup of the previous section. We assume further that $\mathcal{C}$ and $\tilde{\mathcal{C}}$ are cluster structures in normal Noetherian domains $R$ and $\tilde{R}$, respectively.

\begin{definition}\label{d:birat}
A quasi-isomorphism $\mathcal{Q}:\mathcal{C}\rightarrow\tilde{\mathcal{C}}$ is called \emph{birational} if the marked variables $\mathbf{x}_0(\kappa)$ and $\tilde{\mathbf{x}}_0(\kappa)$ are regular, and the isomorphism\footnote{By $\mathbf{x}_0(\kappa)^{\pm 1}$ we mean the localization at each marked variable $x_i \in \mathbf{x}_0(\kappa)$.}
\begin{equation}
\mathcal{Q}:\mathcal{A}(\mathbf{x}_0)[\mathbf{x}_0(\kappa)^{\pm 1}] \xrightarrow{\sim} \mathcal{A}(\tilde{\mathbf{x}}_0)[\tilde{\mathbf{x}}_0(\kappa)^{\pm 1}]
\end{equation}
extends to an isomorphism of rings $\mathcal{Q}:R[\mathbf{x}_0(\kappa)^{\pm 1}] \xrightarrow{\sim} \tilde{R}[\tilde{\mathbf{x}}_0(\kappa)^{\pm 1}]$.
\end{definition} 

Assume there is another cluster structure $\hat{\mathcal{C}}$ in a normal Noetherian domain $\hat{R}$, and such that $(\mathcal{C},\hat{\mathcal{C}},\kappa^\prime)$ is a related triple. Similarly to the case of $\tilde{\mathcal{C}}$, all the data associated with $\hat{\mathcal{C}}$ is decorated with a hat. 

\begin{definition}\label{d:compl}
    Assume there is a pair $\mathcal{Q}:(\mathcal{C},R)\rightarrow(\tilde{\mathcal{C}},\tilde{R})$ and $\mathcal{Q}^\prime:(\mathcal{C},R) \rightarrow (\hat{\mathcal{C}},\hat{R})$ of birational quasi-isomorphisms. Then the pair is called \emph{complementary} if $\mathcal{I}(\kappa) \cap \mathcal{I}(\kappa^\prime) = \emptyset$.
\end{definition}
 \section{Main results}\label{s:main_results}
 In this section, we state the main results of the paper. We begin with a technical definition.

\begin{definition}\label{d:admissible}
    Let $A \subseteq \mathbf{x}_0$ be a subset of initial variables. We say that $A$ is \emph{admissible} if it satisfies the following conditions:
    \begin{enumerate}[1)]
        \item All variables in $A$ are regular;
        \item All cluster variables in $A$ are pairwise coprime;
        \item For each cluster variable $x \in A$, $x^\prime$ is regular and coprime with $x$.
    \end{enumerate}
\end{definition}

Likewise, one defines an admissible set of initial variables $A \subseteq \tilde{\mathbf{x}}_0$ in $(\tilde{\mathcal{C}},\tilde{R})$. Note that for $A = \mathbf{x}_0$, the conditions listed above are precisely the conditions of the Starfish lemma (see Proposition~\ref{p:starfish}).

\begin{proposition}\label{p:birat_single}
     Let $(\mathcal{C},R)$ and $(\tilde{\mathcal{C}},\tilde{R})$ be cluster structures of geometric type in normal Noetherian domains $R$ and $\tilde{R}$ with fixed initial extended clusters $\mathbf{x}_0$ and $\tilde{\mathbf{x}}_0$. Assume the following:
     \begin{enumerate}[1)]
         \item There is a marker $\kappa$ that makes $(\mathcal{C},\tilde{\mathcal{C}},\kappa)$ a related triple, along with a birational quasi-isomorphism $\mathcal{Q}:(\mathcal{C},R)\rightarrow(\tilde{\mathcal{C}},\tilde{R})$;\label{i:birsing1}
         \item The initial extended cluster $\tilde{\mathbf{x}}_0$ is admissible;\label{i:birsing2}
         \item The set of marked variables $\mathbf{x}_0(\kappa)$ is admissible;\label{i:birsing3}
         \item The initial extended cluster $\mathbf{x}_0$ is regular;\label{i:birsing4}
         \item Each nonmarked cluster variable in $\mathbf{x}_0$ is coprime with each marked variable. \label{i:birsing5}
    \end{enumerate}
    Then $\mathbf{x}_0$ is admissible, $\bar{\mathcal{A}}(\tilde{\mathcal{C}}) \subseteq \tilde{R}$ and $\bar{\mathcal{A}}(\mathcal{C}) \subseteq R$.
\end{proposition}

\begin{proposition}\label{p:birat_double}
    Let $R$, $\tilde{R}$ and $\hat{R}$ be normal Noetherian domains, $(\mathcal{C},R)$, $(\tilde{\mathcal{C}},\tilde{R})$ and $(\hat{\mathcal{C}},\hat{R})$ be cluster structures of geometric type in $R$, $\tilde{R}$ and $\hat{R}$, with fixed initial extended clusters $\mathbf{x}_0$, $\tilde{\mathbf{x}}_0$ and $\hat{\mathbf{x}}_0$. Assume the following:
    \begin{enumerate}[1)]
        \item There are markers $\kappa$ and $\kappa^\prime$ that make $(\mathcal{C},\tilde{\mathcal{C}},\kappa)$ and $(\mathcal{C},\hat{\mathcal{C}},\kappa^\prime)$ related triples, as well as a pair of complementary birational quasi-isomorphisms $\mathcal{Q}:(\mathcal{C},R) \rightarrow (\tilde{\mathcal{C}},\tilde{R})$ and $\mathcal{Q}^\prime:(\mathcal{C},R)\rightarrow(\hat{\mathcal{C}},\hat{R})$; \label{i:bird_1}
        \item The initial extended clusters $\tilde{\mathbf{x}}_0$ and $\hat{\mathbf{x}}_0$ are admissible; \label{i:bird_2}
        \item Every $x_i \in \mathbf{x}_0(\kappa)$ is coprime with every $x_j \in \mathbf{x}_0(\kappa^\prime)$; \label{i:bird_3}
        \item The initial extended cluster $\mathbf{x}_0$ is regular. \label{i:bird_4}
    \end{enumerate}
    Then $\mathbf{x}_0$ is admissible, as well as $\bar{\mathcal{A}}(\hat{\mathcal{C}}) \subseteq \hat{R}$, $\bar{\mathcal{A}}(\tilde{\mathcal{C}}) \subseteq \tilde{R}$ and $\bar{\mathcal{A}}(\mathcal{C}) \subseteq R$.
\end{proposition}

The next two results are analogues of Proposition~\ref{p:birat_single} and Proposition~\ref{p:birat_double} in case the given rings are UFD's. It is often desirable to show that the initial variables are irreducible\footnote{If $R$ is a UFD, we call a non-zero, non-invertible element $x \in R$ \emph{irreducible} if whenever $x = yz$ for some $y,z \in R$, then $y$ or $z$ is invertible. Recall that for UFD's, $x$ is irreducible if and only if $x$ is prime.}; this allows, in turn, to drop Condition~\ref{i:bird_3} in the previous proposition.

\begin{definition}\label{d:iadmissible}
    Let $A$ be a subset of initial variables. We say that $A$ is \emph{i-admissible} if it satisfies the following conditions:
    \begin{enumerate}[1)]
        \item All variables in $A$ are regular and irreducible;
        \item For each cluster variable $x_i \in A$, $x_i^\prime$ is regular and coprime with $x_i$.
    \end{enumerate}
\end{definition}
Likewise, we define an i-admissible set of initial variables in $(\tilde{\mathcal{C}},\tilde{R})$. 

We denote by $R^{\times}$ the group of units of $R$, and by $\mathbb{K}$ any field extension of $\mathbb{Q}$. By a factorial algebra we mean an algebra whose underlying commutative ring is a UFD.

\begin{proposition}\label{p:fbirat_single}
   Let $(\mathcal{C},R)$ and $(\tilde{\mathcal{C}},\tilde{R})$ be cluster structures of geometric type in factorial Noetherian $\mathbb{K}$-algebras with no zero divisors and such that $R^{\times} = \tilde{R}^\times = \mathbb{K}^{\times}$. Let $\mathbf{x}_0$ and $\tilde{\mathbf{x}}_0$ be fixed initial extended clusters. Assume the following:
     \begin{enumerate}[1)]
         \item There is a marker $\kappa$ that makes $(\mathcal{C},\tilde{\mathcal{C}},\kappa)$ a related triple, along with a birational quasi-isomorphism $\mathcal{Q}:(\mathcal{C},R)\rightarrow(\tilde{\mathcal{C}},\tilde{R})$;\label{i:fbirsing1}
         \item The initial extended cluster $\tilde{\mathbf{x}}_0$ is i-admissible;\label{i:fbirsing2}
         \item The set of marked variables $\mathbf{x}_0(\kappa)$ is i-admissible;\label{i:fbirsing3}
         \item The initial extended cluster $\mathbf{x}_0$ is regular;\label{i:fbirsing4}
        \item Each nonmarked cluster variable in $\mathbf{x}_0$ is coprime with each marked variable.\label{i:fbirsing5}
    \end{enumerate}
    Then $\mathbf{x}_0$ is i-admissible, as well as $\bar{\mathcal{A}}_{\mathbb{K}}(\tilde{\mathcal{C}}) \subseteq \tilde{R}$ and $\bar{\mathcal{A}}_{\mathbb{K}}(\mathcal{C}) \subseteq R$.
\end{proposition}

\begin{proposition}\label{p:birat_double_ufd}
    Let $R$, $\tilde{R}$ and $\hat{R}$ be factorial Noetherian $\mathbb{K}$-algebras with no zero divisors and such that $R^{\times} = \tilde{R}^\times = \hat{R}^{\times} = \mathbb{K}^{\times}$, $(\mathcal{C},R)$, $(\tilde{\mathcal{C}},\tilde{R})$ and $(\hat{\mathcal{C}},\hat{R})$ be cluster structures of geometric type in $R$, $\tilde{R}$ and $\hat{R}$, with fixed initial extended clusters $\mathbf{x}_0$, $\tilde{\mathbf{x}}_0$ and $\hat{\mathbf{x}}_0$. Assume the following:
    \begin{enumerate}[1)]
        \item There are markers $\kappa$ and $\kappa^\prime$ that make $(\mathcal{C},\tilde{\mathcal{C}},\kappa)$ and $(\mathcal{C},\hat{\mathcal{C}},\kappa^\prime)$ related triples, as well as a pair of complementary birational quasi-isomorphisms $\mathcal{Q}:(\mathcal{C},R) \rightarrow (\tilde{\mathcal{C}},\tilde{R})$ and $\mathcal{Q}^\prime:(\mathcal{C},R)\rightarrow(\hat{\mathcal{C}},\hat{R})$; \label{i:fbird_1}
        \item The initial extended clusters $\tilde{\mathbf{x}}_0$ and $\hat{\mathbf{x}}_0$ are i-admissible;
        \item The initial extended cluster $\mathbf{x}_0$ is regular.
    \end{enumerate}
    Then $\mathbf{x}_0$ is i-admissible, as well as $\bar{\mathcal{A}}_{\mathbb{K}}(\hat{\mathcal{C}}) \subseteq \hat{R}$, $\bar{\mathcal{A}}_{\mathbb{K}}(\tilde{\mathcal{C}}) \subseteq \tilde{R}$ and $\bar{\mathcal{A}}_{\mathbb{K}}(\mathcal{C}) \subseteq R$. 
\end{proposition}

\begin{remark}\label{r:factorial}
    The coordinate ring $\mathbb{C}[G]$ of a simply connected simple complex algebraic group $G$ is a UFD such that $(\mathbb{C}[G])^{\times} = \mathbb{C}^\times$ (see, for instance, \cite{popov}). Thus Propositions~\ref{p:fbirat_single} and \ref{p:birat_double_ufd} cover the needs of the Gekhtman--Shapiro--Vainshtein program~\cite{rho, conj} on the construction of multiple generalized cluster structures in $\mathbb{C}[G]$ for  a given group $G$. At the cost of conciseness, both propositions can be extended in various ways. For instance, one can require that $R^{\times}$ and $\tilde{R}^{\times}$ are groups of Laurent monomials in the invertible nonmarked frozen variables with coefficients in $\mathbb{K}^\times$ (thus covering the case of $\mathbb{C}[\GL_n]$). 
\end{remark}

\begin{remark}
    In the definition of an i-admissible set of variables, we include the requirement that the frozen variables are  irreducible. This is only a matter of convenience: we tailor the statement towards our applications (see Remark~\ref{r:prime}). If the frozen variables in $\tilde{R}$ are not irreducible, they may not be irreducible in $R$, but the statements of the above results are still valid.
\end{remark}

The next propositions concern the reverse inclusion $\bar{\mathcal{A}}_{\mathbb{K}}(\mathcal{C}) \supseteq R$ and are based on the result on upper bounds (see Propositions~\ref{p:upper_bound}) in case there are birational quasi-isomorphisms. These results were implicitly present in the proof of \cite[Theorem 3.11]{plethora}. For the sake of completeness, we record them in the setup of the paper.

Recall that given an extended cluster $\mathbf{x}$, we denote by $\mathcal{L}(\mathbf{x})$ the Laurent ring
\begin{equation}
    \mathcal{L}(\mathbf{x}) := \mathbb{Z}[x_1^{\pm 1}, x_2^{\pm 1}, \ldots, x_N^{\pm 1}, x_{N+1}, \ldots, x_{N+M}],
\end{equation}
where the localization is taken at the cluster variables. We denote by $\mathbf{x}_i^\prime$ the mutation of the initial extended cluster $\mathbf{x}_0$ in direction $i \in [1,N]$.

\begin{proposition}\label{p:upper_cont1}
    Let $(\mathcal{C},R)$ and $(\tilde{\mathcal{C}},\tilde{R})$ be cluster structures of geometric type in normal Noetherian domains $R$ and $\tilde{R}$ with fixed initial extended seeds $(\mathbf{x}_0,B_0)$ and $(\tilde{\mathbf{x}}_0,\tilde{B}_0)$. Assume the following:
    \begin{enumerate}[1)]
        \item There is a marker $\kappa$ that makes $(\mathcal{C},\tilde{\mathcal{C}},\kappa)$ a related triple, along with a birational quasi-isomorphism $\mathcal{Q}:(\mathcal{C},R)\rightarrow(\tilde{\mathcal{C}},\tilde{R})$;
        \item The matrices $B_0$ and $\tilde{B}_0$ are of full rank;
        \item The reverse inclusion $\bar{\mathcal{A}} (\tilde{\mathcal{C}})\supseteq \tilde{R}$ holds;
        \item For each $i \in \mathcal{I}(\kappa)$, $\mathcal{L}(\mathbf{x}_i^\prime)\supseteq R$.
    \end{enumerate}
    Then $\bar{\mathcal{A}}(\mathcal{C})\supseteq R$.
\end{proposition}

\begin{remark}
    If $R$ is in addition a $\mathbb{K}$-algebra, one has to substitute $\mathcal{L}(\mathbf{x}_i^\prime)$ and $\bar{\mathcal{A}}(\mathcal{C})$ with $\mathcal{L}_{\mathbb{K}}(\mathbf{x}_i^\prime):=\mathcal{L}(\mathbf{x}_i^\prime)\otimes_{\mathbb{Z}} \mathbb{K}$ and $\bar{\mathcal{A}}_{\mathbb{K}}(\mathcal{C}):=\bar{\mathcal{A}}(\mathcal{C})\otimes_{\mathbb{Z}} \mathbb{K}$ (otherwise, the conditions of Proposition~\ref{p:upper_cont1} can never be satisfied).
\end{remark}

\begin{proposition}\label{p:upper_cont}
        Let $R$, $\tilde{R}$ and $\hat{R}$ be normal Noetherian domains, $(\mathcal{C},R)$, $(\tilde{\mathcal{C}},\tilde{R})$ and $(\hat{\mathcal{C}},\hat{R})$ be cluster structures of geometric type in $R$, $\tilde{R}$ and $\hat{R}$, with fixed initial extended seeds $(\mathbf{x}_0,B_0)$, $(\tilde{\mathbf{x}}_0,\tilde{B}_0)$ and $(\hat{\mathbf{x}}_0,\hat{B}_0)$.  Assume the following:
        \begin{enumerate}[1)]
            \item There are markers $\kappa$ and $\kappa^\prime$ that make $(\mathcal{C},\tilde{\mathcal{C}},\kappa)$ and $(\mathcal{C},\hat{\mathcal{C}},\kappa^\prime)$ related triples, as well as a pair of complementary birational quasi-isomorphisms $\mathcal{Q}:(\mathcal{C},R) \rightarrow (\tilde{\mathcal{C}},\tilde{R})$ and $\mathcal{Q}^\prime:(\mathcal{C},R)\rightarrow(\hat{\mathcal{C}},\hat{R})$;
            \item The matrices $\tilde{B}_0$, $\hat{B}_0$ and $B_0$ are of full rank;
            \item The inclusions $\bar{\mathcal{A}}(\tilde{\mathcal{C}}) \supseteq \tilde{R}$ and $\bar{\mathcal{A}}(\hat{\mathcal{C}}) \supseteq \hat{R}$ hold.
        \end{enumerate}
        Then $\bar{\mathcal{A}}(\mathcal{C}) \supseteq R$.
\end{proposition}

\begin{remark}\label{r:gcs_extend}
    None of the proofs of the above results uses a specific form of the mutation relation. The proofs rely on the following facts: 1) Starfish lemma (implicitly on the Laurent phenomenon); 2) If $\mathbf{x}_0$ is regular, then $x^\prime \in R[x^{\pm 1}]$ for every cluster variable $x\in\mathbf{x}_0$; 3) The equality $\bar{\mathcal{A}}(\mathcal{C}) = \bar{\mathcal{A}}(\mathcal{N})$ for a star-shaped nerve $\mathcal{N}$ in~$\mathcal{C}$. Thus the above results hold for more general cluster structures in which the mentioned facts are true. In particular, if Remark~\ref{r:extension} is taken into account, the above results hold for generalized cluster structures as defined in~\cite{earlier,double}. 
\end{remark}
 \section{Single birational quasi-isomorphism}\label{s:singlebirat}
 The setup for this section is as follows. Let $R$ and $\tilde{R}$ be normal Noetherian domains. Assume that $(\mathcal{C},R)$ and $(\tilde{\mathcal{C}},\tilde{R})$ are cluster structures of geometric type in $R$ and $\tilde{R}$ with fixed initial extended seeds $(\mathbf{x}_0,B_0)$ and $(\tilde{\mathbf{x}}_0,\tilde{B}_0)$, and such that $(\mathcal{C},\tilde{\mathcal{C}},\kappa)$ is a related triple. We also assume there is a birational quasi-isomorphism $\mathcal{Q}:(\mathcal{C},R)\rightarrow(\tilde{\mathcal{C}},\tilde{R})$. The objective of this section is to prove results that involve the use of a single birational quasi-isomorphism; that is, Proposition~\ref{p:birat_single}, Proposition~\ref{p:fbirat_single} and Proposition~\ref{p:upper_cont1}, along with less important results. We note that, according to Definition~\ref{d:birat}, the marked variables $\mathbf{x}_0(\kappa)$ and $\tilde{\mathbf{x}}_0(\kappa)$ are assumed to be regular.

\subsection{Preliminary lemmas}\label{s:prelim}
In this subsection, we establish a few preliminary lemmas. 
\begin{lemma}\label{l:loc_inters}
    Let $A$ and $B$ be multiplicative subsets of $R$ that do not contain $0$. Assume that each element of $A$ is coprime with each element of $B$. Then $R = A^{-1}R \cap B^{-1}R$.
\end{lemma}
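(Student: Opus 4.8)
The plan is to reduce everything to the standard description of a normal Noetherian domain as the intersection of its localizations at height-one primes. The inclusion $R \subseteq A^{-1}R \cap B^{-1}R$ is immediate (every element of $R$ maps into each localization), so the entire content lies in the reverse inclusion, and the proof should take the form: an element of the intersection is checked to belong to $R_{\mathfrak{p}}$ for each height-one prime $\mathfrak{p}$, and then one appeals to $R = \bigcap_{\htt \mathfrak{p}=1} R_{\mathfrak{p}}$.

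Concretely, I would first invoke the fact that a Noetherian integrally closed domain is a Krull domain, so that inside $\Frac(R)$ one has
\begin{equation*}
R = \bigcap_{\htt \mathfrak{p} = 1} R_{\mathfrak{p}},
\end{equation*}
the intersection ranging over all height-one primes $\mathfrak{p}$ of $R$. This is precisely the tool the paper's definition of coprimality (non-containment in a common height-one prime) is tailored to. Consequently, to prove that a given $z \in A^{-1}R \cap B^{-1}R$ lies in $R$, it suffices to prove that $z \in R_{\mathfrak{p}}$ for every height-one prime $\mathfrak{p}$.

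Next I would write $z = r/a = s/b$ with $r,s \in R$, $a \in A$, $b \in B$; this is possible because $z$ lies in both localizations and $0 \notin A \cup B$. Now fix a height-one prime $\mathfrak{p}$. By hypothesis $a$ and $b$ are coprime, so by definition they are not simultaneously contained in $\mathfrak{p}$; hence at least one of them lies outside $\mathfrak{p}$. If $a \notin \mathfrak{p}$, then $a$ is a unit in $R_{\mathfrak{p}}$ and $z = r/a \in R_{\mathfrak{p}}$; symmetrically, if $b \notin \mathfrak{p}$, then $z = s/b \in R_{\mathfrak{p}}$. In either case $z \in R_{\mathfrak{p}}$. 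Since $\mathfrak{p}$ was arbitrary, $z$ lies in every such $R_{\mathfrak{p}}$, and therefore $z \in R$ by the Krull characterization, completing the reverse inclusion.

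I do not expect a genuine obstacle here: the argument is essentially a one-line verification once the height-one-localization description of $R$ is invoked. The only points requiring care are (i) confirming that the elementwise coprimality of $A$ against $B$ is exactly strong enough to eliminate each individual height-one prime from being a common obstruction, and (ii) matching the paper's definition of coprimality to the statement "not both in $\mathfrak{p}$" for an arbitrary height-one prime. Neither is difficult, so the main work is simply citing the correct commutative-algebra input (normality plus Noetherianity giving a Krull domain).
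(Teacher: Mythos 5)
Your proof is correct and follows essentially the same route as the paper: both rest on the decomposition $R = \bigcap_{\htt \mathfrak{p} = 1} R_{\mathfrak{p}}$ of a normal Noetherian domain together with the observation that coprimality of $A$ against $B$ forces every height-one prime to miss $A$ or to miss $B$. The only difference is presentational: the paper also writes $A^{-1}R$ and $B^{-1}R$ themselves as intersections of height-one localizations (via the $\Spec$ correspondence and normality of the localized rings) and then intersects these descriptions, whereas you verify membership element-by-element using the two fraction representations $z = r/a = s/b$, which slightly reduces the commutative-algebra input needed.
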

\begin{proof}
    Recall that for normal Noetherian domains, $R = \bigcap_{\htt \mathfrak{p} = 1} R_{\mathfrak{p}}$ (the intersection is over prime ideals of height $1$) and that  $\Spec(A^{-1}R)$ is homeomorphic to $\{\mathfrak{p}\in\Spec(R) \ | \ \mathfrak{p}\cap A = \emptyset\}$. Since $A^{-1}R$ is also a normal Noetherian domain, we can write
    \begin{equation}
    A^{-1}R = \bigcap_{\substack{\mathfrak{p}\cap A =\emptyset, \\ \htt \mathfrak{p}=1}} R_{\mathfrak{p}},
    \end{equation}
    and likewise for $B$. Since all elements in $A$ are coprime with all elements in $B$, for any $\mathfrak{p}\in \Spec(R)$ of height $1$,  $\mathfrak{p} \cap A = \emptyset$ or $\mathfrak{p}\cap B = \emptyset$. This implies that
    \begin{equation}
        R = \bigcap_{\htt \mathfrak{p} = 1}R_{\mathfrak{p}} = \left(\bigcap_{\substack{\mathfrak{p}\cap A = \emptyset, \\ \htt \mathfrak{p}=1}}R_{\mathfrak{p}}\right) \cap \left(\bigcap_{\substack{\mathfrak{p}\cap B = \emptyset, \\ \htt \mathfrak{p}=1}}R_{\mathfrak{p}}\right) = A^{-1}R \cap B^{-1}R.
    \end{equation}
    Thus the statement holds.
\end{proof}
 For an extended marked cluster $\mathbf{x}$ and a regular variable $x \in \mathbf{x}$, let $\mathcal{P}(x)$ be the set of prime divisors\footnote{Let us recall that given an ideal $I$ of a commutative ring $R$, a prime ideal $\mathfrak{p}$ is a \emph{prime divisor} of $I$ if $I \subseteq \mathfrak{p}$. It is a well-known result in commutative algebra that every principal ideal has a finite number of prime divisors, and all of such prime divisors have height $1$.} of $(x)$ in $R$, and likewise (if the related variable $\tilde{x}$ is regular), $\mathcal{P}(\tilde{x})$ be the set of prime divisors of $(\tilde{x})$ in $\tilde{R}$. Note that since $R$ is a normal Noetherian domain, $\mathcal{P}(x)$ consists of minimal prime ideals of $(x)$, and each of them is of height $1$; likewise for $(\tilde{x})$. Set \begin{align}&\mathcal{P}_{\kappa}(x):=\{\mathfrak{p}\in \mathcal{P}(x) \ | \ \mathfrak{p}\cap \mathbf{x}_0(\kappa)=\emptyset\},\\
 &\mathcal{P}_{\kappa}(\tilde{x}):=\{\tilde{\mathfrak{p}} \in \mathcal{P}(\tilde{x}) \ | \ \tilde{\mathfrak{p}}\cap \tilde{\mathbf{x}}_0(\kappa)=\emptyset\}
 \end{align}
 to be the prime divisors of $(x)$ (respectively of $(\tilde{x})$) that do not contain the marked variables.

\begin{lemma}\label{l:minprim}
    Let $x$ be a nonmarked cluster or frozen variable from an extended marked cluster in $\mathcal{C}$. Assume that both $x$ and its related variable $\tilde{x}$ are regular. Then the birational quasi-isomorphism $\mathcal{Q}$ induces a bijection between $\mathcal{P}_{\kappa}(x)$ and $\mathcal{P}_{\kappa}(\tilde{x})$. 
\end{lemma}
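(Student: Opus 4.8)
The plan is to pass to the localizations in which $\mathcal{Q}$ is an honest ring isomorphism, to observe that there $x$ and its related variable $\tilde{x}$ generate the same principal ideal, and then to transport associated primes through the standard localization correspondence.

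First I would set $S := \Mon(\mathbf{x}_0(\kappa))$ and $\tilde{S} := \Mon(\tilde{\mathbf{x}}_0(\kappa))$, so that $R[\mathbf{x}_0(\kappa)^{\pm 1}] = S^{-1}R$ and $\tilde{R}[\tilde{\mathbf{x}}_0(\kappa)^{\pm 1}] = \tilde{S}^{-1}\tilde{R}$; by Definition~\ref{d:birat} the map $\mathcal{Q}$ restricts to a ring isomorphism $\mathcal{Q}\colon S^{-1}R \xrightarrow{\sim} \tilde{S}^{-1}\tilde{R}$. A prime $\mathfrak{p}$ of $R$ meets $S$ precisely when it contains one of the marked variables (a prime contains a product iff it contains a factor), so $\mathcal{P}_{\kappa}(x) = \{\mathfrak{p}\in\mathcal{P}(x) : \mathfrak{p}\cap S = \emptyset\}$. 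Since $R$ is Noetherian and $R/(x)$ is finitely generated, the compatibility of associated primes with localization yields a bijection $\mathfrak{p}\mapsto S^{-1}\mathfrak{p}$ between $\mathcal{P}_{\kappa}(x)$ and the set of associated primes of $(x)$ in $S^{-1}R$; the analogous bijection holds for $\tilde{x}$, $\tilde{S}$ and $\tilde{R}$.

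Next I would compute $\mathcal{Q}(x)$ inside $\tilde{S}^{-1}\tilde{R}$. Write $x = x_i$ for the appropriate index $i\in[1,N+M]\setminus\mathcal{I}(\kappa)$ in a marked cluster $\mathbf{x}$, with $\tilde{x} = \tilde{x}_{\kappa(i)}$ the related variable in $\tilde{\mathbf{x}} = \kappa(\mathbf{x})$. Because $\mathbf{x}$ is marked, no marked variable is ever mutated, so the marked variables of $\tilde{\mathbf{x}}$ coincide with those of $\tilde{\mathbf{x}}_0(\kappa)$; in particular each factor $\tilde{x}_{\kappa(j)}$ with $j\in\mathcal{I}(\kappa)$ lies in $\tilde{S}$ and is a unit in $\tilde{S}^{-1}\tilde{R}$. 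As $i\notin\mathcal{I}(\kappa)$, the product in Formula~\eqref{eq:quasi_formula} ranges over all $j\in\mathcal{I}(\kappa)$ and reads
\[
\mathcal{Q}(x) = \tilde{x}\cdot\prod_{j\in\mathcal{I}(\kappa)} \tilde{x}_{\kappa(j)}^{\lambda_{ij}},
\]
so $\mathcal{Q}(x)$ and $\tilde{x}$ differ by a unit of $\tilde{S}^{-1}\tilde{R}$ and hence generate the same ideal there.

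Finally, since $\mathcal{Q}\colon S^{-1}R\to\tilde{S}^{-1}\tilde{R}$ is a ring isomorphism it carries the associated primes of $(x)$ bijectively onto those of $(\mathcal{Q}(x)) = (\tilde{x})$ in $\tilde{S}^{-1}\tilde{R}$, and composing this with the two localization bijections above produces the required bijection $\mathcal{P}_{\kappa}(x)\xrightarrow{\sim}\mathcal{P}_{\kappa}(\tilde{x})$, given explicitly by $\mathfrak{p}\mapsto \mathcal{Q}(S^{-1}\mathfrak{p})\cap\tilde{R}$. The only delicate point is the bookkeeping in the middle step: one must verify that every Laurent factor appearing in $\mathcal{Q}(x)$ is genuinely among the inverted variables of $\tilde{S}^{-1}\tilde{R}$, which is exactly where the hypothesis that $x$ sits in a marked cluster (equivalently, that marked variables are never mutated) is used, and one must invoke the localization formula for associated primes, valid here because $R$ and $\tilde{R}$ are Noetherian.
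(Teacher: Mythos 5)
Your proof is correct and takes essentially the same route as the paper's: localize at the marked variables, where by Definition~\ref{d:birat} the map $\mathcal{Q}$ is an honest ring isomorphism carrying $x$ to $\tilde{x}$ up to a unit (since the Laurent factors in Formula~\eqref{eq:quasi_formula} are inverted marked variables), and then transport associated primes through the localization correspondence for Noetherian rings. The paper phrases the last step via the homeomorphisms $\Spec(R[\mathbf{x}_0(\kappa)^{\pm 1}])\cong\{\mathfrak{p}\cap\mathbf{x}_0(\kappa)=\emptyset\}$ and leaves the unit and associated-prime bookkeeping implicit, which you spell out explicitly; the content is the same.
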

\begin{proof}
As an isomorphism $\mathcal{Q}:R[\mathbf{x}_0(\kappa)^{\pm 1}] \xrightarrow{\sim} \tilde{R}[\tilde{\mathbf{x}}_0(\kappa)^{\pm 1}]$, $\mathcal{Q}$ sends $x$ to an associate of $\tilde{x}$; therefore, $\Spec \mathcal{Q}$ induces a bijection between the prime divisors of $(x)$ in $R[\mathbf{x}_0(\kappa)^{\pm 1}]$ and the prime divisors of $(\tilde{x})$ in $\tilde{R}[\tilde{\mathbf{x}}_0(\kappa)^{\pm 1}]$; since the prime divisors contract to the elements of $\mathcal{P}_{\kappa}(x)$ and $\mathcal{P}_{\kappa}(\tilde{x})$, we obtain a bijection $\mathcal{P}_{\kappa}(x) \cong \mathcal{P}_{\kappa}(\tilde{x})$.
\end{proof}

\subsection{Transferring cluster and algebraic properties}

In this subsection, we show how to derive coprimality and regularity of nonmarked variables in $\mathcal{C}$ given their regularity and coprimality in $\tilde{\mathcal{C}}$.

\begin{proposition}\label{p:varcoprim}
    Let $x$ and $z$ be any two nonmarked cluster or frozen variables in $\mathcal{C}$ that belong to (possibly different) extended marked clusters. Assume the following conditions:
    \begin{enumerate}[1)]
        \item The variables $x$, $z$, and the related variables $\tilde{x}$ and $\tilde{z}$ are regular;
        \item The variables $\tilde{x}$ and $\tilde{z}$ are coprime;
        \item The variable $x$ is coprime with the marked variables.
    \end{enumerate}
    Then the variables $x$ and $z$ are coprime.
\end{proposition}
\begin{proof}
    Since $x$ is coprime with the marked variables, $\mathcal{P}(x) = \mathcal{P}_{\kappa}(x)$ and $\mathcal{P}(x)\cap (\mathcal{P}(z)\setminus \mathcal{P}_{\kappa}(z)) = \emptyset$.
    By Lemma~\ref{l:minprim}, $\mathcal{Q}$ induces bijections $\mathcal{P}(x)\cong \mathcal{P}_{\kappa}(\tilde{x})$ and $\mathcal{P}_{\kappa}(z)\cong \mathcal{P}_{\kappa}(\tilde{z})$. Since $\tilde{x}$ and $\tilde{z}$ are coprime, $\mathcal{P}_{\kappa}(\tilde{x})\cap \mathcal{P}_{\kappa}(\tilde{z}) = \emptyset$, hence $\mathcal{P}(x)\cap \mathcal{P}_{\kappa}(z) = \emptyset$. Thus $x$ and $z$ are coprime in $R$.
\end{proof}

\begin{proposition}\label{p:copmut}
    Let $x$ be a nonmarked cluster variable from an extended marked cluster of $\mathcal{C}$. Assume the following:
    \begin{enumerate}[1)]
    \item Both $\tilde{x}$ and its mutation $\tilde{x}^\prime$ are regular and coprime;
    \item The variable $x$ is regular and coprime with the marked variables;
    \item The exchange polynomial $xx^\prime$ is regular.
    \end{enumerate}
    Then $x^\prime$ is regular and is coprime with $x$.
\end{proposition}
\begin{proof}
    Indeed, since $\tilde{x}^\prime$ is regular in $\tilde{R}$, $x^\prime \in R[x^{\pm 1}]\cap R[\mathbf{x}_0(\kappa)^{\pm 1}]$. Since $x$ is coprime with the marked variables, by Lemma~\ref{l:loc_inters}, $x^\prime \in R$. The coprimality follows from Proposition~\ref{p:varcoprim} via $z:=x^\prime$.
\end{proof}

\subsection{Prime elements}
We continue in the setup of the section. Here, we show that if $\tilde{x}$ is a regular cluster or frozen variable that is also a prime element in $\tilde{R}$, then, under certain conditions, the related variable $x$ is also a prime element in $R$.

\begin{lemma}\label{l:prime_gap}
    Let $A$ be a multiplicatively closed subset of $R \setminus \{0\}$, and let $x \in R$ be an element coprime with $A$. If $(x)\cdot A^{-1}R$ is prime in $A^{-1}R$, then $(x)$ is prime in $R$.
\end{lemma}
\begin{proof}
Since the extension $(x)\cdot A^{-1}R$ is prime, the contraction $\mathfrak{p}:=((x)\cdot A^{-1}R)\cap R$ is prime and contains $(x)$. Since $x$ is coprime with $A$, and since $R$ is a normal Noetherian domain, $\mathfrak{p}$ is the only prime divisor of $x$. This implies that $(x)$ admits a primary decomposition $(x) = \bigcap_{i=1}^n \mathfrak{q}_i$ where each $\mathfrak{q}_i$ is $\mathfrak{p}$-primary. Since a finite intersection of $\mathfrak{p}$-primary ideals is $\mathfrak{p}$-primary, we conclude that $(x)$ is a $\mathfrak{p}$-primary ideal. Now recall that if $\mathfrak{q}$ is a $\mathfrak{p}$-primary ideal such that $\mathfrak{q}\cap A = \emptyset$, then $(\mathfrak{q}\cdot A^{-1}R)\cap R = \mathfrak{q}$; therefore, $(x) = \mathfrak{p}$, and thus $x$ is prime in $R$.
\end{proof}


\begin{proposition}\label{p:prime_el}
    Let $x$ be a regular nonmarked variable from an extended marked cluster in $\mathcal{C}$. Assume that $\tilde{x}$ is a prime element in $\tilde{R}$, and assume that both $x$ and $\tilde{x}$ are coprime with the marked variables. Then $x$ is prime in $R$.
\end{proposition}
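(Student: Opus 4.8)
The plan is to transfer the primality of $\tilde{x}$ in $\tilde{R}$ back to $x$ in $R$ by passing through the localizations at the marked variables, where the birational quasi-isomorphism $\mathcal{Q}$ provides a genuine ring isomorphism. First I would observe that because $\tilde{x}$ is prime in $\tilde{R}$, the ideal $(\tilde{x})$ is itself prime, so it has a single associated prime, namely $(\tilde{x})$ itself; since $\Cop_{\kappa}(\tilde{x})$ holds, this prime misses $\tilde{\mathbf{x}}_0(\kappa)$, i.e.\ $\mathcal{P}(\tilde{x}) = \mathcal{P}_{\kappa}(\tilde{x}) = \{(\tilde{x})\}$. By Lemma~\ref{l:minprim}, $\mathcal{Q}$ induces a bijection $\mathcal{P}_{\kappa}(x) \cong \mathcal{P}_{\kappa}(\tilde{x})$, and since $\Cop_{\kappa}(x)$ forces $\mathcal{P}(x) = \mathcal{P}_{\kappa}(x)$, I conclude that $(x)$ has exactly one associated prime ideal $\mathfrak{p}$ in $R$.

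The next step is to produce the hypothesis of Lemma~\ref{l:prime_gap}. Set $A := \Mon(\mathbf{x}_0(\kappa))$, the multiplicative set generated by the marked variables; coprimality of $x$ with each marked variable (from $\Cop_{\kappa}(x)$) gives $A \cap \mathfrak{p} = \emptyset$. The key identity to establish is $x \cdot A^{-1}R = \mathfrak{p}\cdot A^{-1}R$, equivalently $(x) = \mathfrak{p}$ after inverting the marked variables. Here I would use that $\mathcal{Q}: R[\mathbf{x}_0(\kappa)^{\pm 1}] \xrightarrow{\sim} \tilde{R}[\tilde{\mathbf{x}}_0(\kappa)^{\pm 1}]$ is an isomorphism carrying $x$ to $\tilde{x}$ up to a unit: since $\tilde{x}$ is prime in $\tilde{R}$ and $(\tilde{x})$ meets no marked variable, $\tilde{x}$ remains prime (hence $(\tilde{x})$ is its own radical and equals its unique associated prime) in the localization $\tilde{R}[\tilde{\mathbf{x}}_0(\kappa)^{\pm 1}]$. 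Transporting this back through $\mathcal{Q}$ shows that $x$ generates a prime ideal in $R[\mathbf{x}_0(\kappa)^{\pm 1}]$ that matches the extension of $\mathfrak{p}$; under the order-isomorphism of height-one primes avoiding $A$ this yields $x\cdot A^{-1}R = \mathfrak{p}\cdot A^{-1}R$.

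Once that equality is in hand, Lemma~\ref{l:prime_gap} applies verbatim with this $x$, $\mathfrak{p}$, and $A$, and delivers $(x) = \mathfrak{p}$. Finally I must upgrade ``$(x)$ is a prime ideal'' to ``$x$ is a prime element.'' Since $\mathfrak{p}$ is an associated prime of $(x)$ in the normal Noetherian domain $R$, it has height $1$; a height-one prime equal to a principal ideal $(x)$ forces $x$ to be a prime element (in a Noetherian normal domain, a principal height-one prime is generated by a prime element). This gives the conclusion that $x$ is prime in $R$.

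The main obstacle I anticipate is the middle step: verifying the ideal equality $x\cdot A^{-1}R = \mathfrak{p}\cdot A^{-1}R$ in the localization. One must be careful that the single-associated-prime structure of $(x)$ is genuinely preserved under localization at $A$ and correctly matched, via $\mathcal{Q}$, to the fact that $\tilde{x}$ cuts out a prime ideal on the other side; the bookkeeping that $\mathcal{Q}$ sends $x$ to $\tilde{x}$ only up to a unit (rather than on the nose) and the identification of associated primes across the isomorphism is where the care is needed, even though each individual fact is a standard consequence of Lemma~\ref{l:minprim} and the definition of a birational quasi-isomorphism.
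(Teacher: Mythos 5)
Your proposal is correct and follows essentially the same route as the paper's proof: Lemma~\ref{l:minprim} (combined with $\Cop_{\kappa}(x)$, $\Cop_{\kappa}(\tilde{x})$ and the primality of $\tilde{x}$) pins down a unique associated prime $\mathfrak{p}$ of $(x)$, the isomorphism $\mathcal{Q}:R[\mathbf{x}_0(\kappa)^{\pm 1}]\xrightarrow{\sim}\tilde{R}[\tilde{\mathbf{x}}_0(\kappa)^{\pm 1}]$ transports primality of $\tilde{x}$ to primality of $x$ in the localization, and Lemma~\ref{l:prime_gap} then yields $(x)=\mathfrak{p}$. You merely spell out some steps the paper leaves implicit (e.g.\ the identification $x\cdot A^{-1}R=\mathfrak{p}\cdot A^{-1}R$), which is fine.
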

\begin{proof}
    Since $\tilde{x}$ is coprime with the marked variables, $\tilde{x}$ is prime in $\tilde{R}[\tilde{\mathbf{x}}_0(\kappa)^{\pm 1}]$, and since $\mathcal{Q}$ sends $\tilde{x}$ to an associate of $x$ in $R[\mathbf{x}_0(\kappa)^{\pm 1}]$, $x$ is prime in the latter ring. By Lemma~\ref{l:prime_gap}, $x$ is prime in $R$.
\end{proof}

\begin{remark}\label{r:prime}
    When a cluster structure is compatible with a Poisson bracket (in the sense of~\cite{dasbuch}), frozen variables are observed to have the following property: their zero loci foliate into unions of symplectic leaves of the ambient Poisson variety. To prove the latter, one may use an algebraic criterion from~\cite[Proposition 2.3]{yakimov_locus} or~\cite[Remark 2.4]{yakimov_det}, which in particular requires showing that a given frozen variable is a prime element in the coordinate ring of the given variety. Proposition~\ref{p:prime_el} provides a way of transferring this property via a birational quasi-isomorphism.  For an example of the use of Proposition~\ref{p:prime_el} in this context, one may refer to our latest work~\cite{multdual}.
\end{remark}

\subsection{Proof of Proposition~\ref{p:birat_single}}

Recall that a set of initial variables is admissible if it satisfies the conditions of the Starfish lemma (see Definition~\ref{d:admissible}). In this subsection, we prove Proposition~\ref{p:birat_single}, which we restate for convenience.

\begin{proposition*}
     Let $(\mathcal{C},R)$ and $(\tilde{\mathcal{C}},\tilde{R})$ be cluster structures of geometric type in normal Noetherian domains $R$ and $\tilde{R}$ with fixed initial extended clusters $\mathbf{x}_0$ and $\tilde{\mathbf{x}}_0$. Assume the following:
     \begin{enumerate}[1)]
         \item There is a marker $\kappa$ that makes $(\mathcal{C},\tilde{\mathcal{C}},\kappa)$ a related triple, along with a birational quasi-isomorphism $\mathcal{Q}:(\mathcal{C},R)\rightarrow(\tilde{\mathcal{C}},\tilde{R})$;\label{ii:birsing1}
         \item The initial extended cluster $\tilde{\mathbf{x}}_0$ is admissible;\label{ii:birsing2}
         \item The set of marked variables $\mathbf{x}_0(\kappa)$ is admissible;\label{ii:birsing3}
        \item The initial extended cluster $\mathbf{x}_0$ is regular;\label{ii:birsing4}
         \item Each nonmarked cluster variable in $\mathbf{x}_0$ is coprime with each marked variable. \label{ii:birsing5}
    \end{enumerate}
    Then $\mathbf{x}_0$ is admissible, $\bar{\mathcal{A}}(\tilde{\mathcal{C}}) \subseteq \tilde{R}$ and $\bar{\mathcal{A}}(\mathcal{C}) \subseteq R$.
\end{proposition*}
\begin{proof}
It follows from the Starfish lemma and Condition~\ref{ii:birsing2} that $\bar{\mathcal{A}}(\tilde{\mathcal{C}}) \subseteq \tilde{R}$. To deduce $\bar{\mathcal{A}}(\mathcal{C}) \subseteq R$, we need to show that $\mathbf{x}_0$ is admissible. By Proposition~\ref{p:varcoprim}, we see that the initial cluster variables in $\mathbf{x}_0$ are pairwise coprime; by Proposition~\ref{p:copmut}, we see that for each initial cluster variable $x \in \mathbf{x}_0$, $x^\prime$ is regular and coprime with $x$. Thus $\bar{\mathcal{A}}(\mathcal{C}) \subseteq R$.
\end{proof}

\subsection{Proof of Proposition~\ref{p:fbirat_single}}

Recall that an initial set of variables is i-admissible if the variables are regular, irreducible, and (for cluster variables) coprime with their mutations (see Definition~\ref{d:iadmissible}). In this subsection, we prove Proposition~\ref{p:fbirat_single}, which we restate for convenience.

\begin{proposition*}
   Let $(\mathcal{C},R)$ and $(\tilde{\mathcal{C}},\tilde{R})$ be cluster structures of geometric type in factorial Noetherian $\mathbb{K}$-algebras with no zero divisors and such that $R^{\times} = \tilde{R}^\times = \mathbb{K}^{\times}$. Let $\mathbf{x}_0$ and $\tilde{\mathbf{x}}_0$ be fixed initial extended clusters. Assume the following:
     \begin{enumerate}[1)]
         \item There is a marker $\kappa$ that makes $(\mathcal{C},\tilde{\mathcal{C}},\kappa)$ a related triple, along with a birational quasi-isomorphism $\mathcal{Q}:(\mathcal{C},R)\rightarrow(\tilde{\mathcal{C}},\tilde{R})$;\label{ii:fbirsing1}
         \item The initial extended cluster $\tilde{\mathbf{x}}_0$ is i-admissible;\label{ii:fbirsing2}
         \item The set of marked variables $\mathbf{x}_0(\kappa)$ is i-admissible;\label{ii:fbirsing3}
        \item The initial extended cluster $\mathbf{x}_0$ is regular;\label{ii:fbirsing4}
        \item Each nonmarked cluster variable in $\mathbf{x}_0$ is coprime with each marked variable.\label{ii:fbirsing5}
    \end{enumerate}
    Then $\mathbf{x}_0$ is i-admissible, as well as $\bar{\mathcal{A}}_{\mathbb{K}}(\tilde{\mathcal{C}}) \subseteq \tilde{R}$ and $\bar{\mathcal{A}}_{\mathbb{K}}(\mathcal{C}) \subseteq R$.
\end{proposition*}
\begin{proof}
    Since the initial variables $\tilde{\mathbf{x}}_0$ are algebraically independent over $\mathbb{Q}$, they are algebraically independent over $\mathbb{K}$, and since $\tilde{R}^\times = \mathbb{K}^\times$, the irreducibility of the initial variables implies their coprimality; in other words, $\tilde{\mathbf{x}}_0$ is admissible. Likewise, $\mathbf{x}_0(\kappa)$ is admissible. By Proposition~\ref{p:prime_el}, we see that $\mathbf{x}_0$ is i-admissible, and since the variables in $\mathbf{x}_0$ are algebraically independent and $R^\times = \mathbb{K}^\times$, $\mathbf{x}_0$ is admissible. Now the statement follows from Proposition~\ref{p:birat_single}.
\end{proof}

\subsection{Proof of Proposition~\ref{p:upper_cont1}}

We restate Proposition~\ref{p:upper_cont1} for convenience.

\begin{proposition*}
    Let $(\mathcal{C},R)$ and $(\tilde{\mathcal{C}},\tilde{R})$ be cluster structures of geometric type in normal Noetherian domains $R$ and $\tilde{R}$ with fixed initial extended seeds $(\mathbf{x}_0,B_0)$ and $(\tilde{\mathbf{x}}_0,\tilde{B}_0)$. Assume the following:
    \begin{enumerate}[1)]
        \item There is a marker $\kappa$ that makes $(\mathcal{C},\tilde{\mathcal{C}},\kappa)$ a related triple, along with a birational quasi-isomorphism $\mathcal{Q}:(\mathcal{C},R)\rightarrow(\tilde{\mathcal{C}},\tilde{R})$;
        \item The matrices $B_0$ and $\tilde{B}_0$ are of full rank;
        \item The reverse inclusion $\bar{\mathcal{A}} (\tilde{\mathcal{C}})\supseteq \tilde{R}$ holds;
        \item For each $i \in \mathcal{I}(\kappa)$, $\mathcal{L}(\mathbf{x}_i^\prime)\supseteq R$.
    \end{enumerate}
    Then $\bar{\mathcal{A}}(\mathcal{C})\supseteq R$.
\end{proposition*}
\begin{proof}
    Let $\tilde{\mathcal{N}}$ and $\mathcal{N}$ be star-shaped nerves centered at $\tilde{\mathbf{x}}_0$ and  $\mathbf{x}_0$, respectively. By Proposition~\ref{p:upper_bound}, $\bar{\mathcal{A}}(\mathcal{C}) = \bar{\mathcal{A}}(\mathcal{N})$ and $\bar{\mathcal{A}}(\tilde{\mathcal{C}}) = \bar{\mathcal{A}}(\tilde{\mathcal{N}})$. For each mutation $\mathbf{x}_i^\prime$ of $\mathbf{x}_0$ in direction $i \in [1,N]\setminus\mathcal{I}(\kappa)$, there is an isomorphism of rings $\mathcal{Q}:\mathcal{L}(\mathbf{x}_i^\prime)\xrightarrow{\sim} \mathcal{L}(\tilde{\mathbf{x}}_{\kappa(i)}^\prime)[\tilde{\mathbf{x}}_0(\kappa)^{\pm 1}]$; therefore,
        \begin{equation}\label{eq:barincl}
            R\subseteq \mathcal{Q}^{-1}(\tilde{R}[\tilde{\mathbf{x}}_0(\kappa)^{\pm 1}]) \subseteq \bigcap_{\tilde{\mathbf{x}}\in\tilde{\mathcal{N}}}\mathcal{Q}^{-1}\left(\mathcal{L}(\tilde{\mathbf{x}})[\tilde{\mathbf{x}}_0(\kappa)^{\pm 1}] \right) = \bigcap_{i\in [1,N]\setminus\mathcal{I}(\kappa)}\mathcal{L}(\mathbf{x}_i^\prime),
        \end{equation}
        and since $R\subseteq \mathcal{L}(\mathbf{x}_i^\prime)$ for $i\in\mathcal{I}(\kappa)$, we conclude that $R \subseteq \bar{\mathcal{A}}(\mathcal{C})$.
\end{proof}
 \section{Complementary birational quasi-isomorphisms}\label{s:doublebirat}
 In this section, we prove the results that involve the use of a pair of complementary birational quasi-isomorphisms; that is, Proposition~\ref{p:birat_double}, Proposition~\ref{p:fbirat_single} and Proposition~\ref{p:upper_cont}.

The setup for the section is as follows. Let $(\mathcal{C},R)$, $(\tilde{\mathcal{C}},\tilde{R})$ and $(\hat{\mathcal{C}},\hat{R})$ be cluster structures of geometric type in normal Noetherian domains $R$, $\tilde{R}$ and $\hat{R}$ with fixed initial extended seeds $(\mathbf{x}_0,B_0)$,  $(\tilde{\mathbf{x}}_0,\tilde{B}_0)$ and $(\hat{\mathbf{x}}_0,\hat{B}_0)$, such that $(\mathcal{C},\tilde{\mathcal{C}},\kappa)$ and $(\mathcal{C},\hat{\mathcal{C}},\kappa^\prime)$ are related triples for some markers $\kappa$ and $\kappa^\prime$. We assume that there exists a pair of complementary birational quasi-isomorphisms $\mathcal{Q}:(\mathcal{C},R)\rightarrow(\tilde{\mathcal{C}},\tilde{R})$ and $\mathcal{Q}^\prime:(\mathcal{C},R) \rightarrow (\hat{\mathcal{C}},\hat{R})$; that is, we assume $\mathcal{I}(\kappa)\cap\mathcal{I}(\kappa^\prime) = \emptyset$. Similarly to the case of $\tilde{\mathcal{C}}$, all the data associated with $\hat{\mathcal{C}}$ is decorated with a hat. Along with $\tilde{\mathbf{x}}_0(\kappa)$ and $\hat{\mathbf{x}}_0(\kappa^\prime)$ defined in~\eqref{eq:tildx0kap}, we also set
\begin{align}
    &\tilde{\mathcal{I}}(\kappa^\prime):=\kappa(\mathcal{I}(\kappa^\prime)),& &\hat{\mathcal{I}}(\kappa):=\kappa^\prime(\mathcal{I}(\kappa)),\\
    &\tilde{\mathbf{x}}_0(\kappa^\prime):= \{x_i \in \tilde{\mathbf{x}}_0 \ | \ i \in \tilde{\mathcal{I}}(\kappa^\prime)\},& &\hat{\mathbf{x}}_0(\kappa):= \{x_i \in \hat{\mathbf{x}}_0 \ | \ i \in \hat{\mathcal{I}}(\kappa)\}.
\end{align}
In other words, $\tilde{\mathbf{x}}_0(\kappa^\prime)$ are the variables in $\tilde{\mathbf{x}}_0$ that correspond to the marked variables of the related triple $(\mathcal{C},\hat{\mathcal{C}},\kappa^\prime)$, and likewise for $\hat{\mathbf{x}}_0(\kappa)$. 

\begin{definition}
    Given a pair of related triples $(\mathcal{C},\tilde{\mathcal{C}},\kappa)$ and $(\mathcal{C},\tilde{\mathcal{C}},\kappa^\prime)$ as above, a variable in
    $\mathcal{C}$, $\tilde{\mathcal{C}}$ or $\hat{\mathcal{C}}$ is called \emph{marked} if it is marked with respect to one of the triples. An extended cluster is \emph{marked} if it is marked relative both related triples.   
\end{definition}

\subsection{Transferring cluster and algebraic properties}

We begin with an auxiliary result.

\begin{proposition}\label{p:cbir_vars}
    Let $x$ be a nonmarked cluster or frozen variable from an extended marked cluster in $\mathcal{C}$. Assume the following:
    \begin{enumerate}[1)]
        \item The variable $\tilde{x}$ is 
         regular and coprime with $\tilde{\mathbf{x}}_0(\kappa^\prime)$;\label{p:cbir1}
        \item The variable $\hat{x}$ is regular and coprime with $\hat{\mathbf{x}}_0(\kappa)$;\label{p:cbir2}
        \item Every $x_i \in \mathbf{x}_0(\kappa)$ is coprime with every $x_j \in \mathbf{x}_0(\kappa^\prime)$.\label{p:cbir3}
    \end{enumerate}
    Then $x$ is regular and coprime with all the marked variables. 
\end{proposition}
\begin{proof}
Since $\mathcal{Q}$ induces an isomorphism of localizations $\mathcal{Q} : R[\mathbf{x}_0(\kappa)^{\pm 1}] \xrightarrow{\sim} \tilde{R}[\tilde{\mathbf{x}}_0(\kappa)^{\pm 1}]$ and since $\tilde{x} \in \tilde{R}$, we see that $x \in R[\mathbf{x}_0(\kappa)^{\pm 1}]$; likewise, since $\hat{x}$ is regular, we see that $x \in R[\mathbf{x}_0(\kappa^\prime)^{\pm 1}]$. From Lemma~\ref{l:loc_inters} and Condition~\ref{p:cbir3}, we conclude that $R = R[\mathbf{x}_0(\kappa)^{\pm 1}] \cap R[\mathbf{x}_0(\kappa^\prime)^{\pm 1}]$, and therefore, $x \in R$. To deduce coprimality between $x$ and a marked variable $x_j \in \mathbf{x}_0(\kappa^\prime)$, we apply Proposition~\ref{p:varcoprim} relative $(\mc,\tilde{\mc},\kappa)$; specifically, the variable $z$ from Proposition~\ref{p:varcoprim} is the variable $x$, and the variable $x$ from Proposition~\ref{p:varcoprim} is the variable $x_j$ (for which the coprimality with $\mathbf{x}_0(\kappa)$ is assumed); hence, $x$ is coprime with all the variables in $\mathbf{x}_0(\kappa^\prime)$. Similarly, the coprimality between $x$ and the marked variables in $\mathbf{x}_0(\kappa)$ follows from Proposition~\ref{p:varcoprim} applied to $(\mc,\hat{\mc},\kappa^\prime)$. \qedhere
\end{proof}

\subsection{Proof of Proposition~\ref{p:birat_double}}
We restate Proposition~\ref{p:birat_double} for convenience.

\begin{proposition*}
    Let $R$, $\tilde{R}$ and $\hat{R}$ be normal Noetherian domains, $(\mathcal{C},R)$, $(\tilde{\mathcal{C}},\tilde{R})$ and $(\hat{\mathcal{C}},\hat{R})$ be cluster structures of geometric type in $R$, $\tilde{R}$ and $\hat{R}$, with fixed initial extended clusters $\mathbf{x}_0$, $\tilde{\mathbf{x}}_0$ and $\hat{\mathbf{x}}_0$. Assume the following:
    \begin{enumerate}[1)]
        \item There are markers $\kappa$ and $\kappa^\prime$ that make $(\mathcal{C},\tilde{\mathcal{C}},\kappa)$ and $(\mathcal{C},\hat{\mathcal{C}},\kappa^\prime)$ related triples, as well as a pair of complementary birational quasi-isomorphisms $\mathcal{Q}:(\mathcal{C},R) \rightarrow (\tilde{\mathcal{C}},\tilde{R})$ and $\mathcal{Q}^\prime:(\mathcal{C},R)\rightarrow(\hat{\mathcal{C}},\hat{R})$; \label{ii:bird_1}
        \item The initial extended clusters $\tilde{\mathbf{x}}_0$ and $\hat{\mathbf{x}}_0$ are admissible; \label{ii:bird_2}
        \item Every $x_i \in \mathbf{x}_0(\kappa)$ is coprime with every $x_j \in \mathbf{x}_0(\kappa^\prime)$; \label{ii:bird_3}
        \item The initial extended cluster $\mathbf{x}_0$ is regular. \label{ii:bird_4}
    \end{enumerate}
    Then $\mathbf{x}_0$ is admissible, as well as $\bar{\mathcal{A}}(\hat{\mathcal{C}}) \subseteq \hat{R}$, $\bar{\mathcal{A}}(\tilde{\mathcal{C}}) \subseteq \tilde{R}$ and $\bar{\mathcal{A}}(\mathcal{C}) \subseteq R$.
\end{proposition*}
\begin{proof}
As in the proof of Proposition~\ref{p:birat_single}, Condition~\ref{ii:bird_2} is enough to conclude $\bar{\mathcal{A}}(\hat{\mathcal{C}}) \subseteq \hat{R}$ and $\bar{\mathcal{A}}(\tilde{\mathcal{C}}) \subseteq \tilde{R}$.  To claim $\bar{\mathcal{A}}(\mathcal{C}) \subseteq R$, we need to show that $\mathbf{x}_0$ is admissible. By Proposition~\ref{p:cbir_vars}, we see that each nonmarked initial variable $x \in \mathbf{x}_0$ is coprime with all the marked variables, and if $x$ is in addition a cluster variable, by Proposition~\ref{p:copmut}, $x^\prime$ is regular and coprime with $x$. By Proposition~\ref{p:varcoprim} applied to $x,z \in \mathbf{x}_0 \setminus \mathbf{x}_0(\kappa)$ relative $(\mathcal{C},\tilde{\mathcal{C}},\kappa)$, we see that $x$ and $z$ are coprime (and likewise, if $x,z \in \mathbf{x}\setminus \mathbf{x}_0(\kappa^\prime)$, apply the proposition relative $(\mathcal{C},\hat{\mathcal{C}},\kappa^\prime)$). By Proposition~\ref{p:copmut}, the mutation $x^\prime$ of a marked variable $x$ is regular and coprime with $x$. Therefore, $\mathbf{x}_0$ is admissible, and thus $\bar{\mathcal{A}}(\mathcal{C}) \subseteq R$.
\end{proof}

\subsection{Proof of Proposition~\ref{p:birat_double_ufd}}
For this subsection, let us introduce additional notation:
\begin{equation}
\hat{\mathbf{x}}_0(\kappa,\kappa^\prime):= \hat{\mathbf{x}}_0(\kappa) \cup \hat{\mathbf{x}}_0(\kappa^\prime), \ \ \tilde{\mathbf{x}}_0(\kappa,\kappa^\prime):= \tilde{\mathbf{x}}_0(\kappa) \cup \tilde{\mathbf{x}}_0(\kappa^\prime), \ \  \mathbf{x}_0(\kappa,\kappa^\prime):= \mathbf{x}_0(\kappa) \cup \mathbf{x}_0(\kappa^\prime).
\end{equation}

\begin{lemma}\label{l:cv_fact_prim}
    Assume that $R$, $\tilde{R}$ and $\hat{R}$ are factorial Noetherian $\mathbb{K}$-algebras with no zero divisors and such that $R^{\times} = \tilde{R}^\times = \hat{R}^{\times} = \mathbb{K}^{\times}$. Assume that all the variables in $\tilde{\mathbf{x}}_0(\kappa,\kappa^\prime)$ and $\hat{\mathbf{x}}_0(\kappa,\kappa^\prime)$ are prime elements in $\tilde{R}$ and $\hat{R}$, respectively. Then $\mathbf{x}_0(\kappa,\kappa^\prime)$ are  prime elements in $R$.
\end{lemma}
\begin{proof}
As in the proof of Proposition~\ref{p:fbirat_single}, since the variables are algebraically independent over $\mathbb{K}$, we see that the marked variables in $\tilde{R}$ and $\hat{R}$ are pairwise coprime. By Proposition~\ref{p:prime_el}, it is enough to show that each $x_i \in \mathbf{x}_0(\kappa)$ is coprime with each $x_j \in \mathbf{x}_0(\kappa^\prime)$. Indeed, assume that $f \in R$ is such that $f \ | \ x_i$ and $f \ | \ x_j$. Then $\mathcal{Q}(f)$ is invertible in $\tilde{R}[\tilde{\mathbf{x}}_0(\kappa)^{\pm 1}]$, and therefore, $f$ is a Laurent monomial in $\mathbf{x}_0(\kappa)$ with a coefficient from $\mathbb{K}^\times$. Applying $\mathcal{Q}^\prime$, we see that $f$ is also invertible in $\hat{R}[\mathbf{x}_0(\kappa^\prime)^{\pm 1}]$, and since the marked variables in $\hat{R}$ are prime and pairwise coprime, we conclude that $f \in \mathbb{K}^\times$. Therefore, $x_i$ is coprime with $x_j$, and by Proposition~\ref{p:prime_el}, we conclude that all the marked variables are prime in $R$.
\end{proof}

The next result corresponds to Proposition~\ref{p:birat_double_ufd}.
\begin{proposition*}
    Let $R$, $\tilde{R}$ and $\hat{R}$ be factorial Noetherian $\mathbb{K}$-algebras with no zero divisors and such that $R^{\times} = \tilde{R}^\times = \hat{R}^{\times} = \mathbb{K}^{\times}$, $(\mathcal{C},R)$, $(\tilde{\mathcal{C}},\tilde{R})$ and $(\hat{\mathcal{C}},\hat{R})$ be cluster structures of geometric type in $R$, $\tilde{R}$ and $\hat{R}$, with fixed initial extended clusters $\mathbf{x}_0$, $\tilde{\mathbf{x}}_0$ and $\hat{\mathbf{x}}_0$. Assume the following:
    \begin{enumerate}[1)]
        \item There are markers $\kappa$ and $\kappa^\prime$ that make $(\mathcal{C},\tilde{\mathcal{C}},\kappa)$ and $(\mathcal{C},\hat{\mathcal{C}},\kappa^\prime)$ related triples, as well as a pair of complementary birational quasi-isomorphisms $\mathcal{Q}:(\mathcal{C},R) \rightarrow (\tilde{\mathcal{C}},\tilde{R})$ and $\mathcal{Q}^\prime:(\mathcal{C},R)\rightarrow(\hat{\mathcal{C}},\hat{R})$; 
        \item The initial extended clusters $\tilde{\mathbf{x}}_0$ and $\hat{\mathbf{x}}_0$ are i-admissible;\label{ii:fbirat_ufd2}
        \item The initial extended cluster $\mathbf{x}_0$ is regular.
    \end{enumerate}
    Then $\mathbf{x}_0$ is i-admissible, as well as $\bar{\mathcal{A}}_{\mathbb{K}}(\hat{\mathcal{C}}) \subseteq \hat{R}$, $\bar{\mathcal{A}}_{\mathbb{K}}(\tilde{\mathcal{C}}) \subseteq \tilde{R}$ and $\bar{\mathcal{A}}_{\mathbb{K}}(\mathcal{C}) \subseteq R$. 
\end{proposition*}
\begin{proof}
    It follows from Condition~\ref{ii:fbirat_ufd2} and the Starfish lemma that $\bar{\mathcal{A}}_{\mathbb{K}}(\hat{\mathcal{C}}) \subseteq \hat{R}$ and $\bar{\mathcal{A}}_{\mathbb{K}}(\tilde{\mathcal{C}}) \subseteq \tilde{R}$. By Lemma~\ref{l:cv_fact_prim}, the marked variables in $R$ are irreducible and pairwise coprime. By Proposition~\ref{p:copmut}, each mutation $x^\prime$ of a marked variable $x$ is regular and coprime with $x$; hence, $\mathbf{x}_0(\kappa,\kappa^\prime)$ is i-admissible. It follows from Proposition~\ref{p:birat_double} that $\mathbf{x}_0$ is admissible and from Proposition~\ref{p:fbirat_single} that $\mathbf{x}_0$ is i-admissible and $\bar{\mathcal{A}}_{\mathbb{K}}(\mathcal{C}) \subseteq R$.
\end{proof}

\subsection{Proof of Proposition~\ref{p:upper_cont}}

We recall the statement of Proposition~\ref{p:upper_cont} with the full setup.
\begin{proposition*}
        Let $R$, $\tilde{R}$ and $\hat{R}$ be normal Noetherian domains, $(\mathcal{C},R)$, $(\tilde{\mathcal{C}},\tilde{R})$ and $(\hat{\mathcal{C}},\hat{R})$ be cluster structures of geometric type in $R$, $\tilde{R}$ and $\hat{R}$, with fixed initial extended seeds $(\mathbf{x}_0,B_0)$, $(\tilde{\mathbf{x}}_0,\tilde{B}_0)$ and $(\hat{\mathbf{x}}_0,\hat{B}_0)$.  Assume the following:
        \begin{enumerate}[1)]
            \item There are markers $\kappa$ and $\kappa^\prime$ that make $(\mathcal{C},\tilde{\mathcal{C}},\kappa)$ and $(\mathcal{C},\hat{\mathcal{C}},\kappa^\prime)$ related triples, as well as a pair of complementary birational quasi-isomorphisms $\mathcal{Q}:(\mathcal{C},R) \rightarrow (\tilde{\mathcal{C}},\tilde{R})$ and $\mathcal{Q}^\prime:(\mathcal{C},R)\rightarrow(\hat{\mathcal{C}},\hat{R})$;
            \item The matrices $\tilde{B}_0$, $\hat{B}_0$ and $B_0$ are of full rank;
            \item The inclusions $\bar{\mathcal{A}}(\tilde{\mathcal{C}}) \supseteq \tilde{R}$ and $\bar{\mathcal{A}}(\hat{\mathcal{C}}) \supseteq \hat{R}$ hold.
        \end{enumerate}
        Then $\bar{\mathcal{A}}(\mathcal{C}) \supseteq R$.
\end{proposition*}
\begin{proof}
    Since $\bar{\mathcal{A}}(\tilde{\mathcal{C}})\supseteq \tilde{R}$ holds, we see from equation~\eqref{eq:barincl} that, in particular, $\mathcal{L}(\mathbf{x}_i^\prime)\supseteq R$ for $i \in \mathcal{I}(\kappa^\prime)$. Since we also assume $\bar{\mathcal{A}}(\hat{\mathcal{C}})\supseteq \hat{R}$, it follows from Proposition~\ref{p:upper_cont1} applied to $(\mathcal{C},\hat{\mathcal{C}},\kappa^\prime)$ and $\mathcal{Q}^\prime$ that $\bar{\mathcal{A}}(\mathcal{C})\supseteq R$.
\end{proof}

\appendix
\section{Example in $\mathbb{C}[\SL_3]$}\label{s:appendix}
In this appendix, we  illustrate our theory in the case of cluster structures in\footnote{The ground field is not essential (one could choose any field extension of $\mathbb{Q}$).} $\mathbb{C}[\SL_3]$ compatible with Poisson brackets from the Belavin--Drinfeld\footnote{
Given a set of simple roots $\Delta$, a \emph{Belavin--Drinfeld triple} is a triple $\mathbf{\Gamma}:=(\Gamma_1,\Gamma_2,\gamma)$ where $\Gamma_1,\Gamma_2\subset \Delta$ and $\gamma : \Gamma_1 \rightarrow \Gamma_2$ is a nilpotent isometry. Nilpotency means that for every $\alpha \in \Gamma_1$ there is $k \geq 1$ such that $\gamma^k(\alpha) \notin \Gamma_1$. Belavin--Drinfeld triples (together with some additional continuous parameters) parameterize the moduli space of factorizable quasi-triangular Poisson brackets on a connected simple complex algebraic group $G$ \cite{bd2,etingof}. A more general class of Poisson homogeneous brackets is parameterized by pairs of Belavin--Drinfeld triples $(\mathbf{\Gamma}^r,\mathbf{\Gamma}^c)$ \cite{plethora}. This general context is not significant for our purpose, which is to provide explicit examples of birationally quasi-isomorphic cluster structures.}  class. A combinatorial recipe for constructing such cluster structures was first given in~\cite{plethora}. A general construction of quasi-isomorphisms for cluster structures in $\mathbb{C}[\SL_n]$ (in the context of Belavin--Drinfeld brackets) is available in~\cite{rho}. 

We consider four different cluster structures in $\mathbb{C}[\SL_3]$. Following our convention from Section~\ref{s:doublebirat}, we denote three of them as $\mathcal{C}$, $\tilde{\mathcal{C}}$ and $\hat{\mathcal{C}}$. For the fourth one, we use the bar notation $\bar{\mathcal{C}}$. Initial extended clusters consist of $8$ regular functions (or $9$ for $\mathbb{C}[\GL_3]$) indexed by pairs $(i,j)$, $i,j \in \{1,2,3\}$. In each case considered, the marker is defined in such a way as to send a variable with index $(i,j)$ to a variable with the same index $(i,j)$ (for instance, $\bar{f}_{ij}$ is related to $\tilde{f}_{ij}$). All initial quivers are illustrated in Figure~\ref{f:examples} along with the associated pairs of Belavin--Drinfeld triples.

The initial extended cluster for $\bar{\mathcal{C}}$ is well-known from the theory of total positivity. It is given by the following $8$ regular functions on $\SL_3(\mathbb{C})$:
\begin{equation}
    \bar{f}_{ij}(X) := \begin{cases}
        \det X_{[i,n-j+i]}^{[j,n]} &1 \leq i \leq j \leq 3;\\
        \det X_{[i,n]}^{[j,n-i+j]} &1 \leq j \leq i \leq 3.
    \end{cases}
\end{equation}
where $\det X_{I}^{J}$ for $I,J \subseteq [1,3]$ denotes the minor of a matrix $X \in \SL_3(\mathbb{C})$ on $I$ rows and $J$ columns. The variables $\bar{f}_{ij}(X)$ are regular, irreducible and pairwise coprime, and it is well-known that $\mathcal{A}_{\mathbb{C}}(\bar{\mathcal{C}}) = \mathbb{C}[\SL_3]$ (in fact, each matrix entry is a cluster variable).

The initial extended cluster for $\tilde{\mathcal{C}}$ is given by
\begin{equation}
    \tilde{f}_{ij}(X) = \begin{cases}
        \bar{f}_{ij}(X) &(i,j) \neq (1,3)\\
        \det \begin{bmatrix}
        x_{13} & x_{21}\\
        x_{23} & x_{31}
        \end{bmatrix} &(i,j) = (1,3).
    \end{cases}
\end{equation}
The pair $(\tilde{\mathcal{C}},\bar{\mathcal{C}})$ is related, with the marked variables given by $\bar{f}_{31}$ and $\tilde{f}_{31}$. Define a rational map $\mathcal{U} : \SL_3(\mathbb{C}) \dashrightarrow \SL_3(\mathbb{C})$ by
\begin{equation}
    \mathcal{U}(X) := \left(I + \frac{x_{21}}{x_{31}}e_{12}\right)\cdot X.
\end{equation}
The map $\mathcal{U}$ is birational, with the inverse given by
\begin{equation}
    \mathcal{U}^{-1}(X) = \left(I-\frac{x_{21}}{x_{31}}e_{12}\right)\cdot X.
\end{equation}
The induced map on localizations $\mathcal{U}^* : \mathbb{C}[\SL_3][x_{31}^{\pm 1}] \rightarrow \mathbb{C}[\SL_3][x_{31}^{\pm 1}]$ is a birational quasi-isomorphism. According to Proposition~\ref{p:fbirat_single}, the initial cluster of $\tilde{\mathcal{C}}$  consists of irreducible pairwise coprime regular functions, and in order to claim $\bar{\mathcal{A}}_{\mathbb{C}}(\tilde{\mathcal{C}}) = \mathbb{C}[\SL_3]$, one only needs to verify that 
\begin{itemize}
    \item The mutation $\tilde{f}_{31}^\prime$ is regular and not divisible by $x_{31}$;
    \item Setting $F_{31}:=\{\tilde{f}_{ij} \ | \ (i,j) \neq (3,1)\} \cup \{\tilde{f}_{31}^\prime\}$, any regular function $p \in \mathbb{C}[\SL_3]$ is an element of $\mathcal{L}_{\mathbb{C}}(F_{31})$; that is, a Laurent polynomial in terms of the functions in $F_{31}$.
\end{itemize}
For the first condition, one can see that
\begin{equation}
    f_{31}^\prime = \det \begin{bmatrix}
        x_{13} & x_{22}\\
        x_{23} & x_{32}
    \end{bmatrix},
\end{equation}
while the second condition requires some work (which was done in \cite{plethora} for this set of examples).


\begin{figure}[t]
\centering
 \begin{subfigure}[t]{3in}
 \centering
 \includegraphics[scale=0.2]{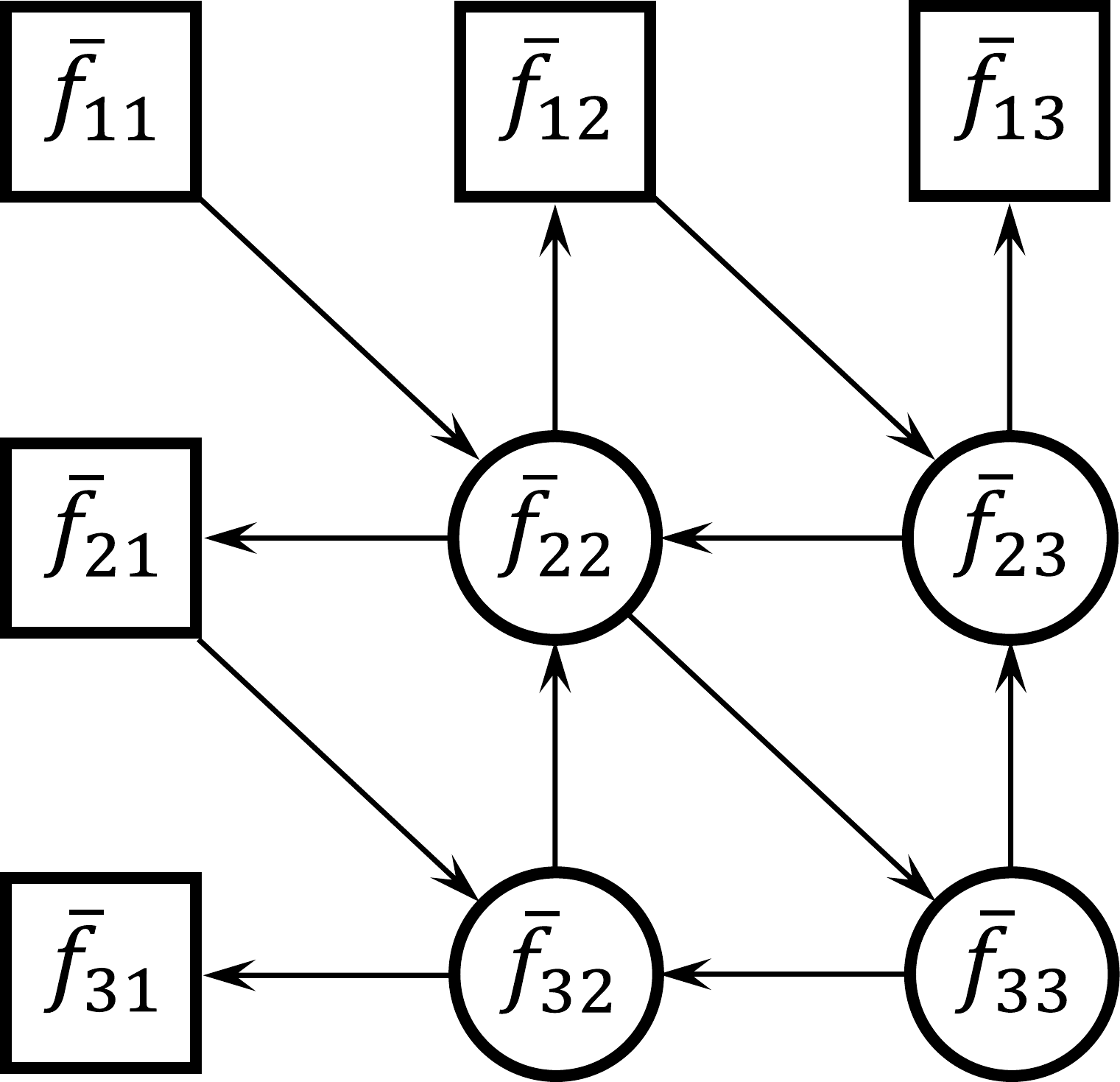}
 \subcaption{Case of $\bar{\mc}$, $(\bg^r,\bg^c) = (\bg_\std,\bg_\std)$.}
 \label{f:st}
 \end{subfigure}
 \begin{subfigure}[t]{3in}
 \centering
 \includegraphics[scale=0.2]{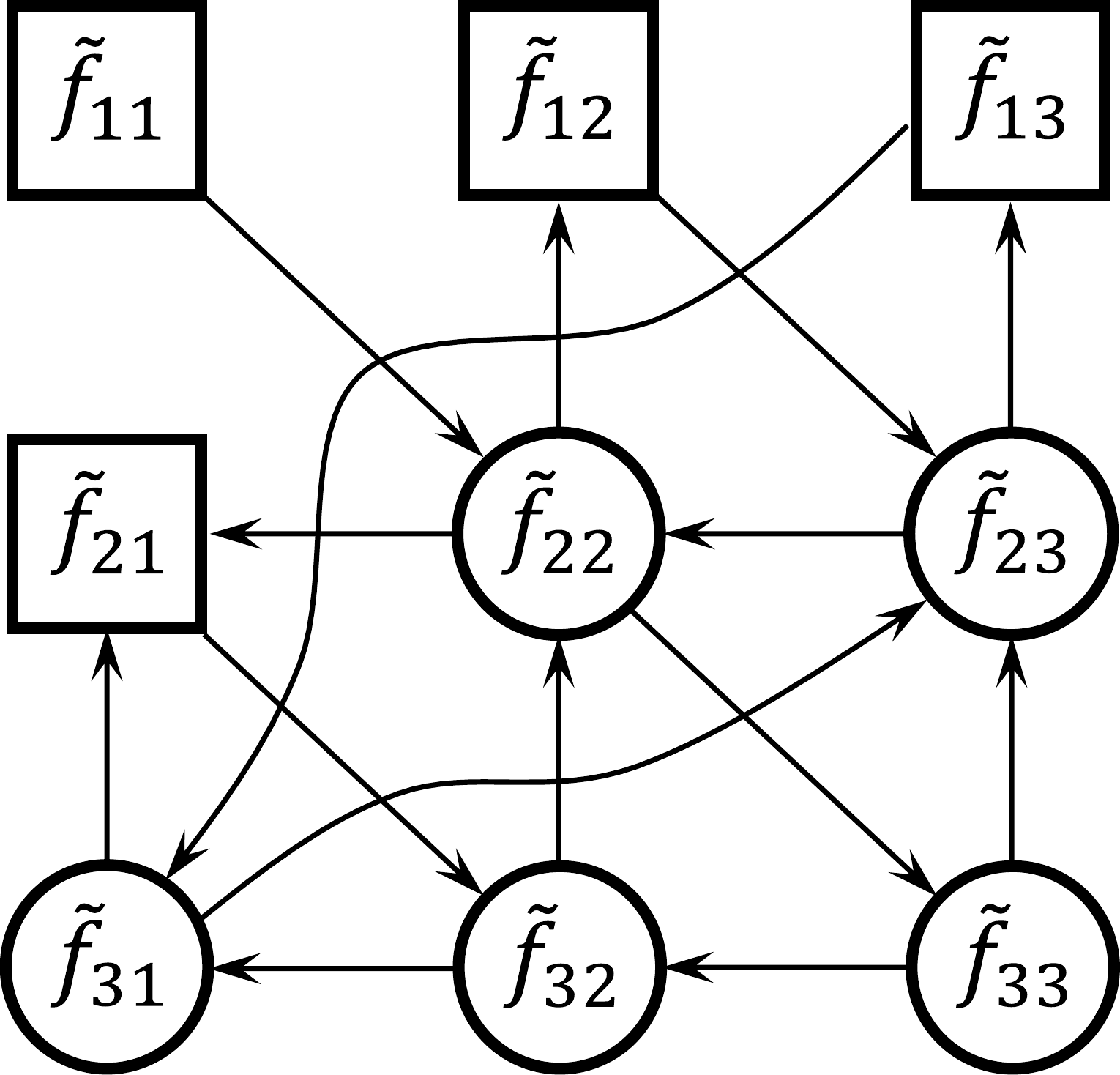}
 \subcaption{Case of $\tilde{\mc}$, $(\bg^r,\bg^c) = (\bg,\bg_\std)$.}
 \label{f:tilde}
 \end{subfigure}
 \begin{subfigure}[t]{3in}
 \centering
\vspace{4mm}
 \includegraphics[scale=0.2]{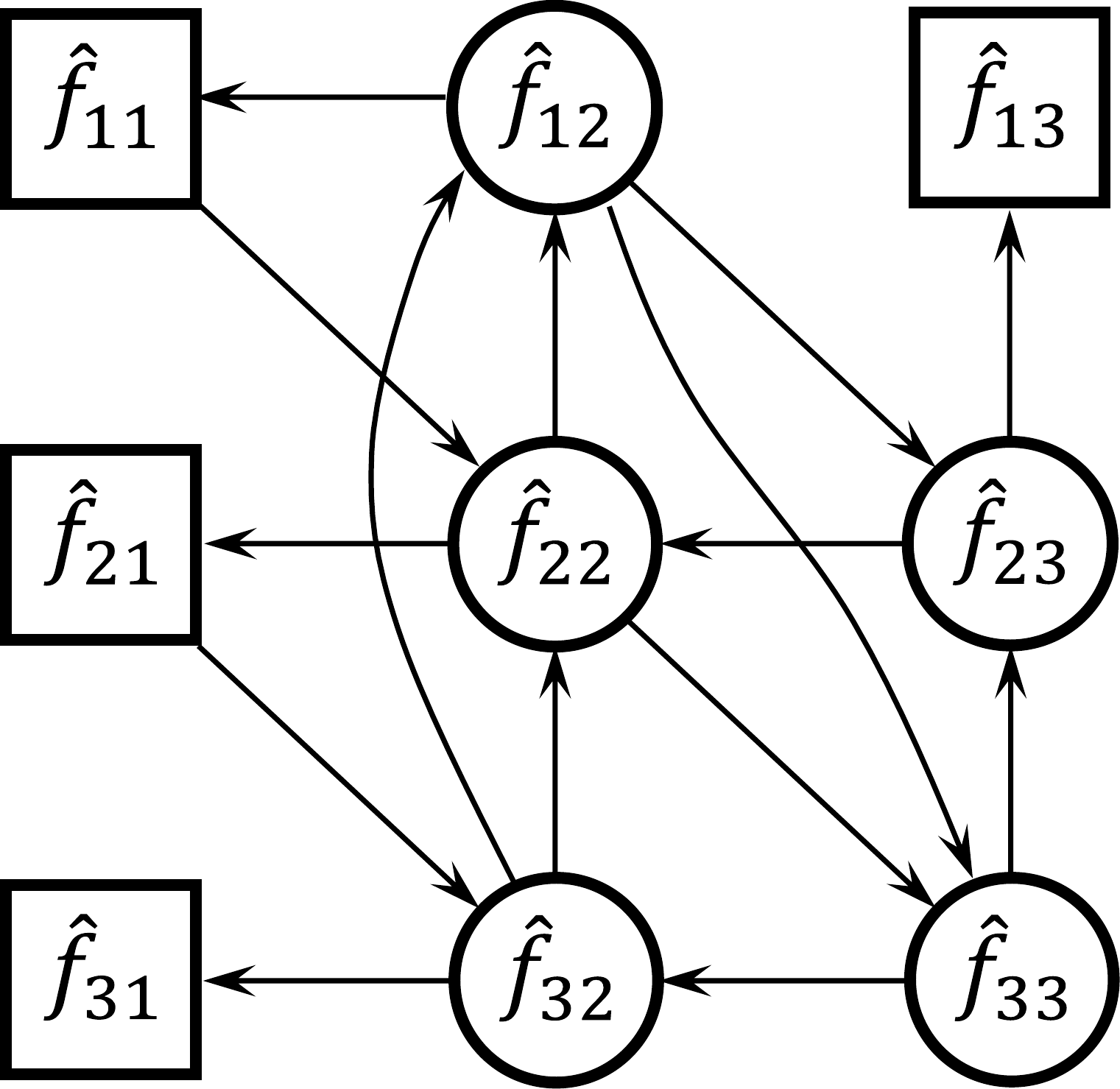}
 \subcaption{Case of $\hat{\mc}$, $(\bg^r,\bg^c) = (\bg_\std,\bg)$.}
 \label{f:hat}
 \end{subfigure}
 \begin{subfigure}[t]{3in}
 \centering
\vspace{4mm}
 \includegraphics[scale=0.2]{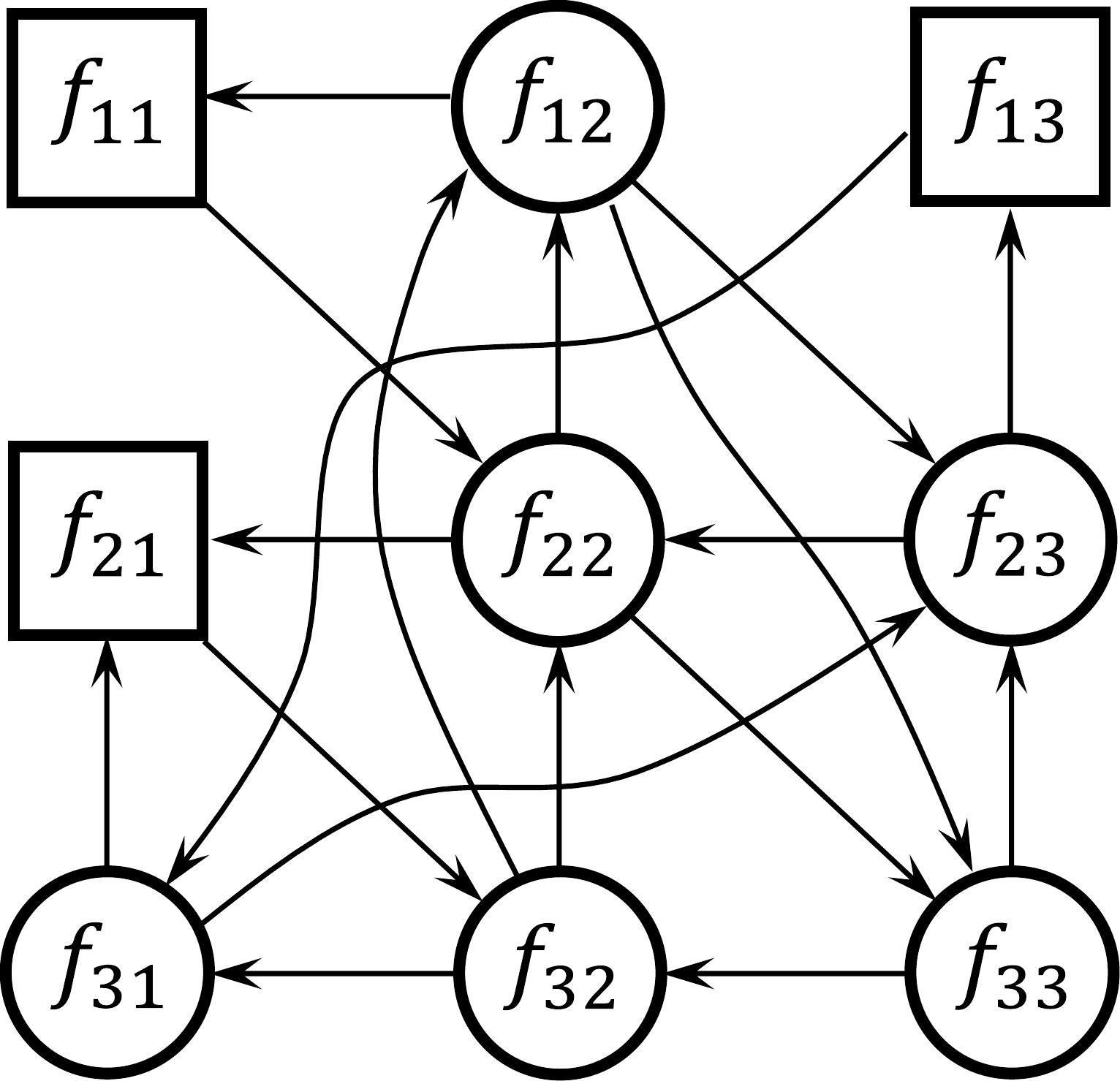}
 \subcaption{Case of $\mc$, $(\bg^r,\bg^c) = (\bg,\bg)$.}
  \label{f:final}
 \end{subfigure}
 \caption{Initial quivers for $\bar{\mc}$, $\tilde{\mc}$, $\hat{\mc}$ and $\mc$; here, $\bg = (\{2\},\{1\},2 \mapsto 1)$ and $\bg_{\std} = (\emptyset,\emptyset,\emptyset\rightarrow \emptyset)$.}
 \label{f:examples}
 \end{figure}
The initial extended cluster for $\hat{\mathcal{C}}$ consists of the following functions:
\begin{equation}
    \hat{f}_{ij}(X) := \begin{cases}
        \bar{f}_{ij}(X) &(i,j) \notin \{(2,1), (3,2)\};\\
         \det X_{[2,3]}^{[1,2]} \det X_{[1,2]}^{[2,3]} - \det X_{[2,3]}^{\{1,3\}} \det X_{[1,2]}^{\{1,3\}} &(i,j) = (2,1);\\
         x_{32}\det X_{[1,2]}^{[2,3]} - x_{33} \det X_{[1,2]}^{\{1,3\}} &(i,j) = (3,2).
    \end{cases}
\end{equation}
The pair $(\hat{\mathcal{C}},\bar{\mathcal{C}})$ is related, with the marked variables given by $\hat{f}_{12}$ and $\bar{f}_{12}$. Define a birational map $\mathcal{U}^\prime : \SL_3(\mathbb{C}) \dashrightarrow \SL_3(\mathbb{C})$ via 
\begin{equation}
    \mathcal{U}^\prime(X) := X\cdot \left(I + \frac{\det X_{[1,2]}^{\{1,3\}}}{\det X_{[1,2]}^{[2,3]} }e_{32}\right).
\end{equation}
Then the induced map $(\mathcal{U}^\prime)^*: \mathbb{C}[\SL_3][\hat{f}_{12}(X)^{\pm 1}] \rightarrow \mathbb{C}[\SL_3][\bar{f}_{12}(X)^{\pm 1}]$ is a birational quasi-isomorphism.

The initial extended cluster for $\mathcal{C}$ is given by
\begin{equation}
    f_{ij}(X) := \begin{cases}
        \bar{f}_{ij}(X) &(i,j) \notin \{(2,1),(3,2),(1,3)\};\\
        \hat{f}_{ij}(X) &(i,j) \in \{(2,1),(3,2)\};\\
        \tilde{f}_{13}(X) &(i,j) = (1,3).
    \end{cases}
\end{equation}
The pair $(\mathcal{C},\bar{\mathcal{C}})$ is related, with the marked variables given by $\{f_{12},f_{31}\}$ in $\mathcal{C}$ and $\{\bar{f}_{12},\bar{f}_{31}\}$ in $\bar{\mathcal{C}}$. There is a birational quasi-isomorphism $(\mathcal{U}^{\pprime})^* : \mathbb{C}[\SL_3][f_{12}^{\pm 1},f_{31}^{\pm 1}]\rightarrow \mathbb{C}[\SL_3][\bar{f}_{12}^{\pm 1},\bar{f}_{31}^{\pm 1}]$ which, as a birational map $\mathcal{U}^{\pprime} : \SL_3(\mathbb{C})\dashrightarrow \SL_3(\mathbb{C})$, is given by
\begin{equation}
\mathcal{U}^{\pprime}(X) = \left(I + \frac{x_{21}}{x_{31}}e_{12}\right)\cdot X\cdot \left(I + \frac{\det X_{[1,2]}^{\{1,3\}}}{\det X_{[1,2]}^{[2,3]} }e_{32}\right).
\end{equation}
In this set of examples, one can also see that $(\mathcal{C},\tilde{\mathcal{C}})$ and $(\mathcal{C},\hat{\mathcal{C}})$ are related pairs. The map $(\mathcal{U}^\prime)^*$ is a birational quasi-isomorphism for the first pair, and the map $(\mathcal{U})^*$ is a birational quasi-isomorphism for the second pair. The two maps constitute a pair of complementary birational quasi-isomorphisms. According to Proposition~\ref{p:birat_double_ufd}, if the conditions of the Starfish lemma and Upper bounds hold for $\tilde{\mathcal{C}}$ and $\hat{\mathcal{C}}$, they hold for $\mathcal{C}$ as well. This was proved in~\cite{plethora} by more direct methods.
\section{Example in $\mathbb{C}[\widehat{\Gr}(3,7)]$}
Consider the affine cone $\widehat{\Gr}(n,m)$ over the Grassmannian $\Gr(n,m)$. It is an affine variety whose coordinate ring is given by the homogeneous coordinate ring of $\Gr(n,m)$. Points of $\widehat{\Gr}(n,m)$ can be identified with decomposable $k$-forms $v_1 \wedge v_2 \wedge \cdots \wedge v_k$, $v_i \in \mathbb{C}^n$. 

Given a natural action of $\SL_n(\mathbb{C})$ upon $\mathbb{C}^n$, extend it to the diagonal action upon $(\mathbb{C}^n)^{\times m} :=\mathbb{C}^n \times \mathbb{C}^n \times \cdots \times \mathbb{C}^n$ ($m$-copies). The coordinate ring $\mathbb{C}[\widehat{\Gr}(n,m)]$ can be described as the ring of invariants
\begin{equation}
    \mathbb{C}[\widehat{\Gr}(n,m)] = \mathbb{C}[(\mathbb{C}^n)^{\times m}]^{\SL_n(\mathbb{C})}.
\end{equation}
It is generated by Pl\"ucker coordinates, which can be described as follows. Identify $(\mathbb{C}^n)^{\times m}$ with the space $\Mat_{\mathbb{C}}(n,m)$ of $n\times m$ matrices over $\mathbb{C}$, and let $Z:=(z_{ij})$ be an $n\times m$ matrix of coordinate functions on $\Mat_{\mathbb{C}}(n,m)$. The diagonal action of $\SL_n(\mathbb{C})$ upon $(\mathbb{C}^n)^{\times m}$ becomes the left multiplication of $\SL_n(\mathbb{C})$ upon $\Mat_{\mathbb{C}}(n,m)$. For an ordered tuple $J$ in the indices $\{1,\ldots,n\}$, the Pl\"ucker coordinate $P_J$ is given by 
    $P_J := Z^{J}$;
that it, $P_J$ is a minor of $Z$ on $J$ columns and all rows.

In \cite{scott}, it was shown that there exists a cluster structure $\tilde{\mathcal{C}}$ such that $\mathbb{C}[\widehat{\Gr}(n,m)] = \mathcal{A}_{\mathbb{C}}(\tilde{\mathcal{C}})$ (see also \cite{fomin6} for a textbook treatment). For $n=3$ and $m = 7$, the cluster structure $\tilde{\mathcal{C}}$ is of rank $6$, with $7$ frozen variables (the initial seed is illustrated in Figure~\ref{f:examplegr}). To illustrate our theory, we will exhibit another cluster structure $\mathcal{C}$ such that $\mathbb{C}[\widehat{\Gr}(n,m)] = \bar{\mathcal{A}}_{\mathbb{C}}(\mathcal{C})$, whose rank is $7$, with $6$ frozen variables, as well as a birational quasi-isomorphism between $\tilde{\mathcal{C}}$ and $\mathcal{C}$ (we will provide a general treatment of cluster structures in $\mathbb{C}[\widehat{\Gr}(n,m)]$ of higher ranks in a separate paper).
 
Let us define the following variables:
\begin{equation}
    S_{i67} := \det \begin{bmatrix}
        P_{i67} & P_{i57}\\ P_{457} & P_{456}
    \end{bmatrix}, \ \ i \in \{1,2,3,4\}.
\end{equation}
The initial seed of another cluster structure $\mathcal{C}$ in $\mathbb{C}[\widehat{\Gr}(n,m)]$ is illustrated in Figure~\ref{f:examplegrnstd}. 

Let us introduce a birational map $\mathcal{U} : \widehat{\Gr}(n,m)\dashrightarrow \widehat{\Gr}(n,m)$. In the nonzero locus of $P_{567}$, let $X = (x_{ij})$ be a $3\times 4$ matrix of coordinate functions given by
\begin{equation}
    (Z^{[5,7]})^{-1}Z = \begin{bmatrix} X & I_3\end{bmatrix},
\end{equation}
where $I_3$ is a $3 \times 3$ identity matrix.
In these coordinates, $\mathcal{U}$ is given by
\begin{equation}
    \mathcal{U}(X) = \left(I_3 + \frac{x_{24}}{x_{34}}e_{12}\right)X.
\end{equation}
Note that $P_{457}(Z) = -x_{24}P_{567}(Z)$ and $P_{456}(Z) = x_{34}P_{567}(Z)$.
One can see that $\mathcal{U}$ induces an isomorphism of localizations \begin{equation}
\mathcal{U}^* : \mathbb{C}[\widehat{\Gr}(n,m)][P_{456}^{\pm 1}] \xrightarrow{\sim} \mathbb{C}[\widehat{\Gr}(n,m)][P_{456}^{\pm 1}].
\end{equation}
It is a birational quasi-isomorphism between  $\tilde{\mathcal{C}}$ and $\mathcal{C}$ (with fixed initial seeds). The only marked variable is given by $P_{456}$.

\begin{figure}[t]
\centering
 \includegraphics[scale=0.2]{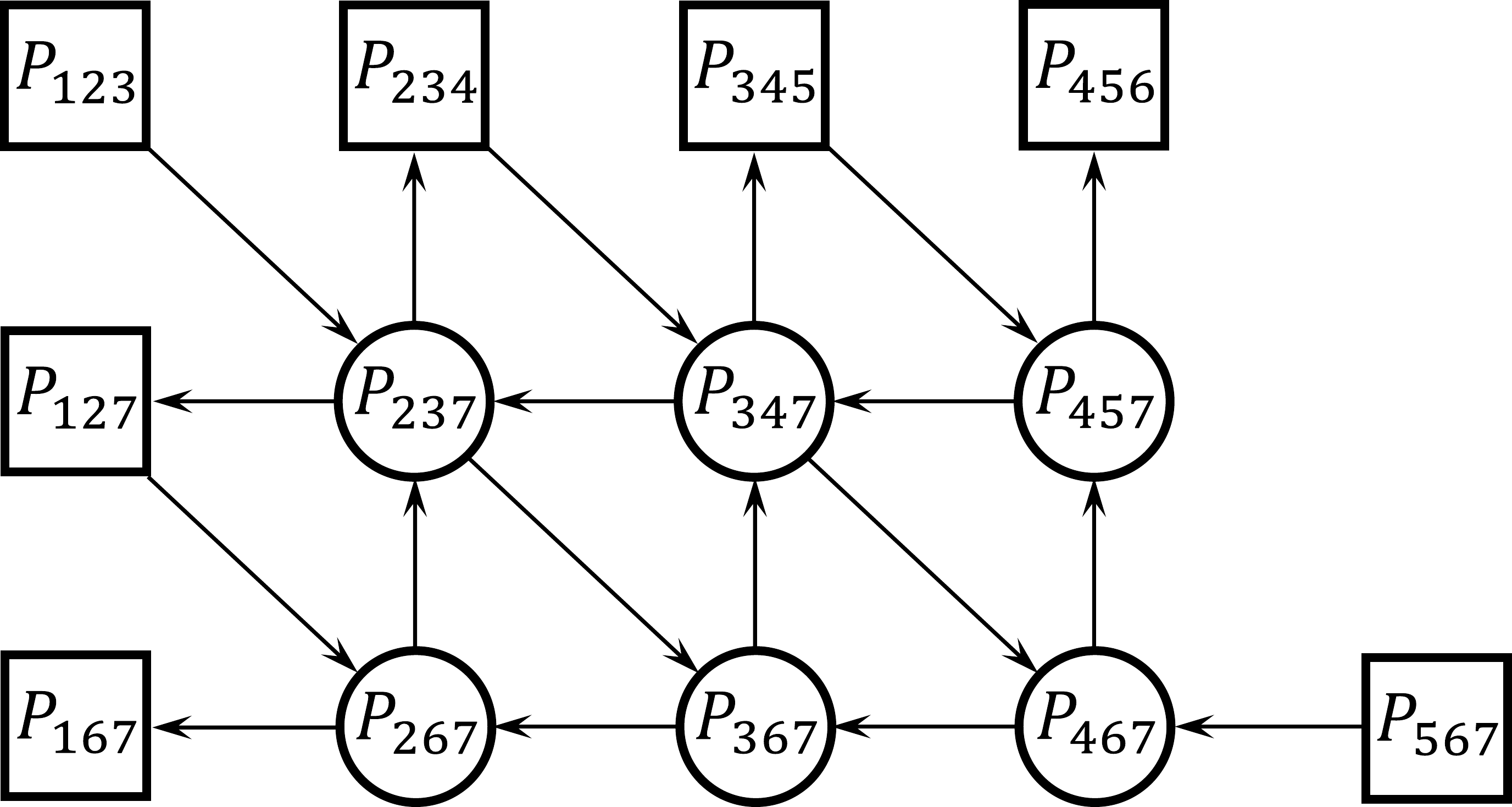}
 \caption{Initial seed for cluster structure $\tilde{\mathcal{C}}$ in $\mathbb{C}[\widehat{\Gr}(3,7)]$.}
 \label{f:examplegr}
 \end{figure}

Since $\mathbb{C}[\widehat{\Gr}(n,m)]$ is a UFD, we can apply Proposition~\ref{p:fbirat_single}. We see that the mutation of the marked variable is given by $P_{456}^\prime = P_{467}$. Clearly, the mutation is regular and coprime with $P_{456}$; therefore, $\bar{\mathcal{A}}_{\mathbb{C}}(\mathcal{C}) \subseteq \mathbb{C}[\widehat{\Gr}(n,m)]$. 


\begin{figure}[t]
\centering
 \includegraphics[scale=0.2]{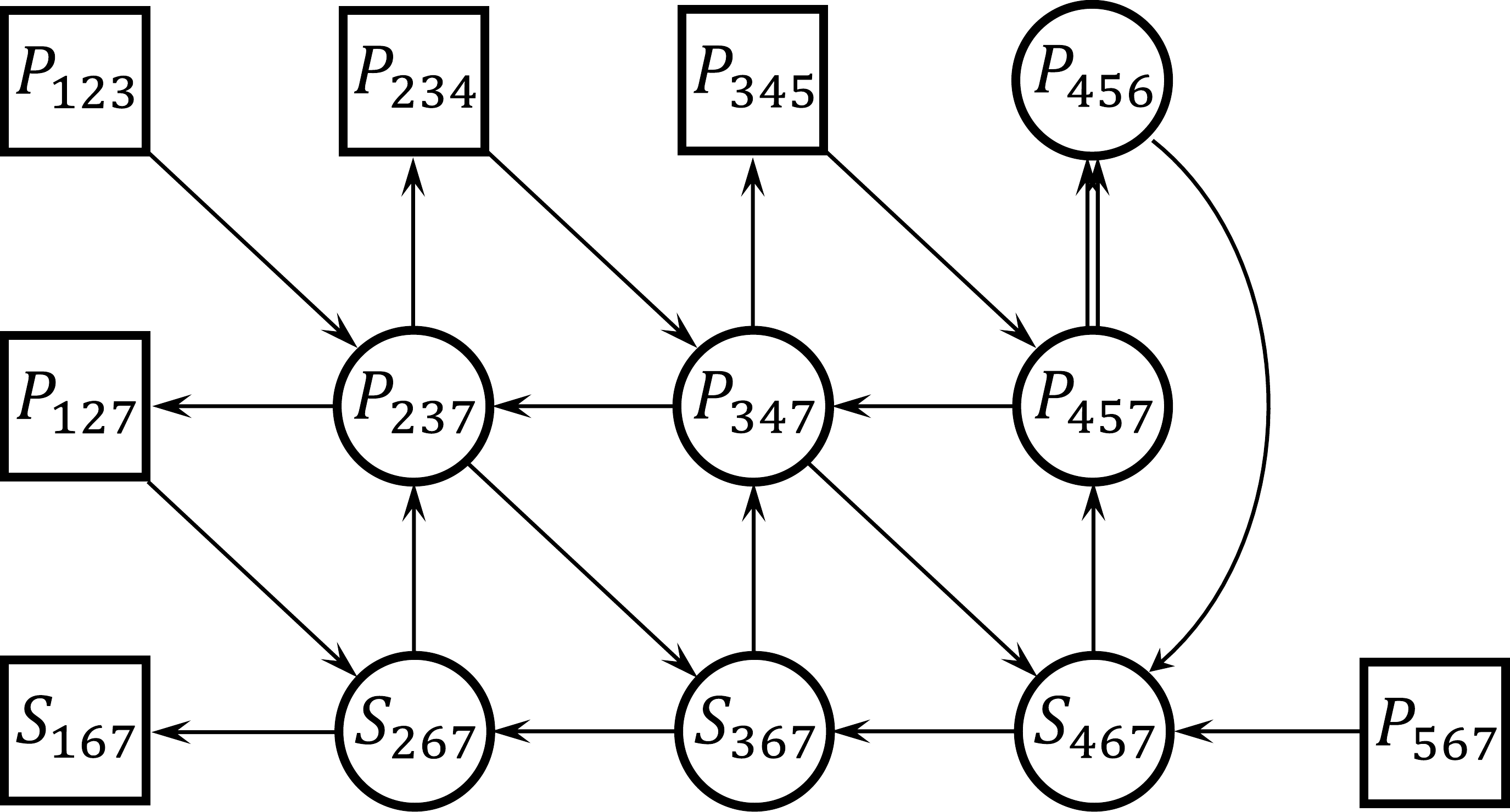}
 \caption{Initial seed for cluster structure ${\mathcal{C}}$ in $\mathbb{C}[\widehat{\Gr}(3,7)]$.}
 \label{f:examplegrnstd}
 \end{figure}\label{s:appendixgr}
  
\end{document}